\documentclass{article}
\usepackage{graphicx}
\usepackage{amsmath}
\usepackage{amssymb}
\usepackage{amsthm}
\usepackage{soul}
\usepackage{biblatex}

\usepackage[a4paper, total={6in, 8in}]{geometry}

\usepackage{verbatim}

\DeclareMathOperator{\diam}{diam}

\DeclareMathOperator{\UW}{UW}
\DeclareMathOperator{\W}{W}
\DeclareMathOperator{\Int}{Int}
\DeclareMathOperator{\Hom}{Hom}

\DeclareMathOperator{\e}{\varepsilon}

\DeclareMathOperator{\Z}{\mathbb{Z}}
\DeclareMathOperator{\N}{\mathbb{N}}
\DeclareMathOperator{\R}{\mathbb{R}}

\theoremstyle{definition}
\newtheorem{definition}{Definition}

\newtheorem{lem}{Lemma}

\newtheorem{rem}{Remark}
\newtheorem{thm}{Theorem}

\title{Urysohn Width and Surgeries}
\author{Aleksandr Berdnikov, Brendan Isley}

\begin{document}
\maketitle
\begin{abstract}
    We analyze the behavior of Urysohn width of manifolds under a connected sum operation, specifically, bounding widths of summands in terms of widths of the sum and vice versa. Our methods also apply to the universal covers of these spaces, and to more general type of surgeries. Lastly, we provide examples that show the optimality of constants in our estimates.
\end{abstract}
\section{Introduction}
Urysohn width is a classical notion in dimension theory, initially posed in the 1920's, which is used to tell us how well we can approximate a space by a $k$-dimensional space for any $k$. Gromov reignited the subject in the 1980's and 1990's within the context of scalar curvature and related notions \cite{gromov1996positive, guth2017volumes}, as well as systolic geometry \cite{gromov1983filling, guth2017volumes, liokumovich2023waist}. A comprehensive history of Urysohn width can be found in the thesis \cite{balitskiy2021bounds}, Section 2.

We state the relevant definitions. 
\begin{definition}
The \emph{width} $\W(f)$ of a mapping $f:X\to Y$ is the size of its largest fiber:
$$\W(f):=\sup_{y\in Y}\diam(f^{-1}(y))$$
\end{definition}

\begin{definition}\label{def_complete}
The \emph{Urysohn $k$-width} $\UW_k(X)$ of a complete metric space $X$ is the smallest width of its projection to a $k$-dimensional space: 
$$\UW_k(X):=\inf\{w\text{ for which }\exists P^k, f:X\to P^k\text{ such that }\W(f)=w\},$$
where $P^k$ is a simplicial $k$-complex and $f$ is a proper map.  
\end{definition} 

We will often call it merely the \emph{$k$-width}. Intuitively, the $k$-width measures how well the space $X$ can be approximated by a $k$-dimensional object. Notice that the $k$-width is bounded above by the $(k-1)$-width, so lower-dimensional width bounds are better when possible, but are often more difficult to prove. 

The \emph{macroscopic dimension} of a space $X$ is defined as 
$$
\dim_{mc}(X) = \min\{k \geq 0\,:\, \UW_k(X) < \infty\}.
$$
For instance, while the cylinder $S^1 \times \R$ has topological dimension equal to $2$, its macroscopic dimension is equal to $1$.

Understanding the relationship between Urysohn width and surgeries can be a tool for proving useful results. For instance, the Kneser-Milnor prime decomposition theorem \cite{milnor1962unique} states that a compact $3$-manifold $M$ can be decomposed uniquely into prime $3$-manifolds under connect sum:
$$
M = X_1 \# \dots \# X_s \# (S^2 \times S^1) \# \dots \# (S^2 \times S^1) \# K_1\#\dots\#K_t,
$$
where the fundamental groups of each $X_i$ are finite and each $K_j$ is aspherical. An important result of Gromov in the theory of scalar curvature states that a compact $3$-manifold $M$ admitting a metric with positive scalar curvature contains no aspherical connected summand in its Kneser-Milnor prime decomposition (\cite{gromov2019four}, Section 3.10. The reverse implication is also true). It is known that any complete $3$-manifold with positive scalar curvature, in particular the universal cover of a compact $3$-manifold with positive scalar curvature, has  bounded $1$-width (\cite{gromov1983positive,gromov2019four}). Gromov's proof then involves using this to prove a restriction on $\pi_1(M)$ using Stalling's theorem from geometric group theory. On the other hand, if we know a bound on the $1$-width of the universal cover of a connect sum implies a bound on the $1$-width of the universal cover of each summand, then we have an alternative proof of this result as a direct consequence of another well-known result: Gromov's result that the macroscopic dimension of the universal cover of an aspherical $n$-manifold is equal to $n$ (\cite{gromov1996positive}, Section 2$\frac{1}{2}$).

Another possible relation between Urysohn width and surgeries comes from a question which is natural and interesting in its own right: If we have two manifolds with bounded $k$-width, would a manifold constructed by joining these manifolds through surgery  also have bounded $k$-width?

In this paper, we address these questions, and give answers in quantitative terms for some particular cases. To formulate our results,  let us describe the type of surgeries we consider. Let $M_1$ and $M_2$ be two $n$-dimensional manifolds, and suppose $A$ is a compact $n$-dimensional manifold with boundary which is embedded into both $M_i$ (these embeddings are presumed to preserve the Riemannian metric of $A$ throughout this paper). By $M_1 \#_A M_2$, we mean the space defined by the following connected sum type construction: we remove the interior of the embedded $A$ in both $M_1$ and $M_2$ and identify $\partial A$ in $M_1$ and $M_2$ (the result may not be a smooth manifold along $\partial A$). In particular, when $A$ is the ball $B^n\subset \Int(M_i)$, then topologically this coincides with the connected sum of $M_1$ and $M_2$.

We are now ready to state our main theorems. 
\begin{thm}\label{nocover}
    Let $M_1$ and $M_2$ be complete $n$-manifolds, each containing an embedded copy of a compact $n$-manifold $A$ with boundary. Then for $i\in \{1,2\}$ and $k\in \N$ we have
    $$\UW_{k}(M_i)\leq 2\UW_{k}(M_1 \#_A M_2)+\diam_{M_i}(A)$$
    In particular, if $M_1 \#_A M_2$ has macroscopic dimension bounded above by $k$, then so does every $M_i$.
\end{thm}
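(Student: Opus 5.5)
Fix $i=1$; the case $i=2$ is symmetric. Write $M=M_1\#_A M_2$, $w=\UW_k(M)$, and take a proper map $f:M\to P^k$ with $\W(f)\le w+\varepsilon$. The plan is to build a proper map $g:M_1\to Q^k$ into another $k$-complex by keeping $f$ on the common part $M_1\setminus\Int(A)$ and collapsing everything near $A$ to one point. Let $Z=f(\partial A)\subset P$, which is compact. Set
\[
U=f^{-1}(Z)\cap \bigl(M_1\setminus \Int A\bigr)\ \cup\ A\subset M_1 .
\]
Define $Q=P/Z$, the quotient that collapses $Z$ to a point $z_0$. We then define $g:M_1\to Q$ by $g=\pi\circ f$ on $M_1\setminus \Int A$ and $g\equiv z_0$ on $A$. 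This is continuous because the two pieces agree on $\partial A$, where $f$ lands in $Z$.

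Next we bound the fibers of $g$. A fiber over a point $q\ne z_0$ is a fiber of $f$ restricted to $M_1\setminus\Int A$, and it avoids $f^{-1}(Z)$. Its diameter in $M_1$ may differ from its diameter in $M$, since a geodesic in $M$ may pass through the $M_2$ side. However, any such path enters and leaves through $\partial A$, so we can replace that segment by a path in $A\subset M_1$ of length at most $\diam_{M_1}(A)$. This is too lossy in general, so instead we use the fiber $U=g^{-1}(z_0)$, which contains all the difficulty. Take $x,y\in U$. Each point of $U$ lies either in $A$ or in a fiber $f^{-1}(f(p))$ for some $p\in\partial A$. Such a point is at $M$-distance at most $w+\varepsilon$ from $\partial A$. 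The estimate needs a path from $x$ to a point of $\partial A$ inside $M_1$, so we take a shortest path in $M$ and truncate it at its first hit of $\partial A$. The truncated piece stays on the $M_1$ side, so its $M_1$-length is at most $w+\varepsilon$. Hence
\[
d_{M_1}(x,y)\le (w+\varepsilon)+\diam_{M_1}(A)+(w+\varepsilon).
\]
This gives the bound $2w+\diam_{M_1}(A)$. The same truncation argument also handles the other fibers: a shortest path in $M$ between two points of $M_1\setminus\Int A$ that dips into $M_2$ can be rerouted through $A$. The cleaner approach is to include in $U$ all fibers that meet $\partial A$ rather than only the points themselves. A fiber not meeting $\partial A$ is contained in a connected component structure we must control. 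I expect this to be the main obstacle: a fiber $F$ of $f$ that does not meet $\partial A$ might still have small $M$-diameter only via paths through $M_2$.

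To fix this, I would enlarge the collapsed set. Let $Z$ be the set of $f$-values of points within $M$-distance $w$ of $\partial A$, or more simply use the following rerouting. If a shortest path between $x,y\in F$ crosses $\partial A$, then both $x$ and $y$ are within distance $w+\varepsilon$ of $\partial A$ along $M_1$-paths. Then $d_{M_1}(x,y)\le 2(w+\varepsilon)+\diam_{M_1}(A)$ anyway. So every fiber of $g$, including unmodified ones, has $M_1$-diameter at most $2(w+\varepsilon)+\diam_{M_1}(A)$. This may let us avoid enlarging $U$ at all, except on the $A$-side.

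Finally, two things need checking. First, properness: $Q$ need not be a simplicial complex after collapsing an arbitrary compact $Z$. I would replace $Z$ by a finite subcomplex neighborhood after subdivision, or map $Q$ to the cone-type complex $P\cup \mathrm{Cone}(Z')$, which stays $k$-dimensional if $Z'$ is a $(k-1)$-subcomplex. Otherwise I would use a mapping-cylinder trick, at a cost of an arbitrarily small $\varepsilon$. Second, properness of $g$: preimages of compact sets are closed and are contained in $A\cup f^{-1}(\text{compact})$, so they are compact. Letting $\varepsilon\to0$ gives the theorem. The macroscopic dimension statement follows immediately, since $A$ is compact.
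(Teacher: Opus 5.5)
Your proposal is correct and follows essentially the paper's proof. Both keep $f$ on $M_1\setminus\Int A$ and collapse $A$, together with every point whose $f$-value lies in $f(\partial A)$, to a single point. Both then bound fibers by truncating $M$-geodesics at their first and last hits of $\partial A$, which gives $2\W(f)+\diam_{M_1}(A)$ for the collapsed fiber and $\W(f)+\diam_{M_1}(A)$ for the other fibers.

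The one place the paper is cleaner is the simplicial-structure technicality. Enlarging $Z$ needs an extra properness argument to control the enlarged collapsed fiber, and coning raises the dimension when $f(\partial A)$ is $k$-dimensional. Instead, use Lemma~\ref{simp} to make $f$ simplicial for a triangulation in which $\partial A$ is a subcomplex. Then $f(\partial A)$ is already a subcomplex, and $P/f(\partial A)$ carries a $k$-complex structure.
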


{\bf Theorem~\ref{nocover}'.} If $M_1$, $M_2$ are complete manifolds, each containing an embedded copy of a compact n-manifold $A$ with boundary, then for $k\in\N$ we have
    $$\UW_k(M_1\#_A M_2)\leqslant \UW_k(M_1)+\UW_k(M_2)+\diam_{M_1\# M_2}(\partial A).$$
We can lower the upper bound in these inequalities under certain assumptions:
\begin{thm}\label{nocover1}
    If in the setting of theorem~\ref{nocover} we have $\dim(M_i)\geqslant 3$, and either
    \begin{itemize}
        \item[a)] $k=1$, and $\partial A$ is simply connected \newline (or, more generally, $\pi_1(\partial A)\to0\in \pi_1(M_i\setminus \Int(A))$ for some $i$, or $H^1(\partial A,\Z)\cong0$), or
        \item[b)] $k=n-1$ while $M_i$ are complete oriented without boundary and $\partial A$ is connected,
    \end{itemize}
    then we have a better estimate: 
    $$\UW_{k}(M_i)\leq \UW_{k}(M_1 \#_A M_2)+\diam_{M_i}(A).$$
\end{thm}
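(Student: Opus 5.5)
The plan is to start from a proper map $f\colon M_1\#_A M_2\to P^k$ of width $w$, where $w$ is close to $\UW_k(M_1\#_A M_2)$. I keep $f$ unchanged on $X_i:=M_i\setminus\Int(A)$ and only redefine it on $A$. The point is to avoid what I expect the proof of Theorem~\ref{nocover} does, namely collapsing the whole set $f(\partial A)$ to a point. That collapse merges two different $f$-fibers touching $\partial A$, which costs $2w+\diam(A)$. Here I instead extend $f|_{\partial A}$ to a map $g\colon A\to P'$. The target $P'$ is $k$-dimensional, and $g(A)$ lies in a region where every $f$-fiber already meets $\partial A$.

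The metric estimate is the same in both cases. Let $x,x'\in X_i$ lie in one $f$-fiber $F$. A path of length $\le w$ between them in the sum either stays in $X_i$, or it enters $M_j$ through $\partial A$ and must leave through $\partial A$. In the second case I replace the excursion into $M_j$ by a path inside $A$, which gives $d_{M_i}(x,x')\le w+\diam_{M_i}(A)$. Now suppose $F$ meets $\partial A$ at some $z$ and $a\in A$. Then $d_{M_i}(x,a)$ is bounded by the length of the initial part of a short path from $x$ to $z$, up to the first time it hits $\partial A$, plus $\diam_{M_i}(A)$. This is again at most $w+\diam_{M_i}(A)$. So it suffices to build $g$ with two properties: $g$ agrees with $f$ on $\partial A$, and every point of $g(A)$ lies in $f(\partial A)$, or in a new part of the complex whose fibers are contained in $A$.

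\emph{Case a).} Here $P$ is a graph. The hypothesis $H^1(\partial A,\Z)=0$, or the weaker condition that $\pi_1(\partial A)\to\pi_1(M_i\setminus\Int A)$ is trivial, makes $f|_{\partial A}$ trivial on $\pi_1$ componentwise. Hence $f|_{\partial A}$ lifts to the universal covering tree $T\to P$. For a connected $\partial A$, the lifted image is a connected subset of a tree, hence a subtree, and it is contractible. I extend the lift over $A$ into this subtree and project back to $P$. Then $g(A)\subset f(\partial A)$, and the estimate above gives $\UW_1(M_i)\le w+\diam(A)$. If $\partial A$ is disconnected, I would first join the components' images inside $P$ by paths in $f(\partial A\cup\text{fibers})$ and cone off. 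This bookkeeping is the only delicate point in case a).

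\emph{Case b).} Here $\dim P=n-1$ and $\partial A$ is a closed, connected, oriented $(n-1)$-manifold. The map $f$ restricted to $M_j\setminus\Int A$ shows that $f_*[\partial A]$ is a boundary in $P$, via a locally finite chain when $M_j$ is noncompact, using properness of $f$. Since $P$ is $(n-1)$-dimensional, it has no $n$-chains, so $f_*[\partial A]=0$ on the chain level. This means $f|_{\partial A}$ has degree $0$ over every top simplex. Using $n-1\ge 2$ and connectedness of $\partial A$, I would homotope $f|_{\partial A}$, rel the preimages of lower skeleta and inside $f(\partial A)$, so that it misses an interior point of each top simplex. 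I would then push it radially into the $(n-2)$-skeleton. The homotopy is realized on a collar of $\partial A$ inside $A$ and stays in $f(\partial A)$. Finally I attach to $P$ the mapping cylinder/cone of the resulting map into the $(n-2)$-skeleton, which is $(n-1)$-dimensional, and map the rest of $A$ into it. The new fibers lie in $A$ or meet $\partial A$, so the same estimate applies.

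I expect the main obstacle to be making this degree-zero compression precise. It needs a Hopf-type theorem simplex by simplex, which requires care with how different top simplices interact along codimension-one faces, while keeping the image inside $f(\partial A)$.
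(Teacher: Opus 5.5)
Your case a) works, by a slightly different mechanism than the paper's. You lift the $\pi_1$-trivial map $f|_{\partial A}$ to the universal tree and extend into the contractible image subtree, so $g(A)$ stays inside $f(\partial A)$ and no new cells are needed. The paper instead contracts $f|_{\partial A}$ by obstruction theory and places the null-homotopy in a fresh copy of the complex glued to $P$ only along $f(\partial A)$ (Lemma~\ref{if_contracts}). For disconnected $\partial A$, contract each component inside its own image to a point $p_j$, and join the $p_j$ by \emph{new} edges to a new vertex. Paths in $P$ could create large fibers, and coning off the $1$-dimensional images would create $2$-cells.

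Case b), however, has a genuine gap, not a technicality. The compression of $f|_{\partial A}$ \emph{inside} $L:=f(\partial A)$ into $L^{(n-2)}$ does not follow from zero local degree, and it is false in general. Doing it rel the preimage of $L^{(n-2)}$ is impossible outright: each top simplex of $\partial A$ over a top simplex $\sigma$ maps homeomorphically onto $\sigma$ with its boundary held fixed. More fundamentally, local degree is an untwisted homological invariant, while compressibility also sees $\pi_1(L)$.

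Here is an example. Take $n=3$, let $E$ be the orientable twisted $I$-bundle over the Klein bottle $B$, let $A$ be a solid torus, and set $M_1=M_2=E\cup_{T^2}A$. These are closed and oriented, and $\partial A$ is connected. Then $M_1\#_AM_2$ is the double of $E$. The bundle projection gives $f:M_1\#_AM_2\to B$, whose restriction to $\partial A=T^2$ is the orientation double cover. Its local degree over every $2$-simplex is $0$, exactly as your chain argument predicts. Yet it is $\pi_1$-injective, so no homotopy inside $B$ can make it miss a point of each $2$-simplex: that would factor $\Z^2\hookrightarrow\pi_1(B)$ through a free group.

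The missing idea is to let the null-homotopy use new cells. First attach cells of dimension $\leqslant n-1$ to make the target $(n-2)$-connected, hence homotopy equivalent to $\bigvee S^{n-1}$. There, by Hurewicz and $H^{n-1}(\partial A;G)\cong G$, the only obstruction is $f_*[\partial A]\in H_{n-1}$, which your boundary argument kills (Lemma~\ref{n-1-contraction}). Then realize that null-homotopy on a collar, with values in a copy of the enlarged complex glued to $P$ only along $f(\partial A)$ (Lemma~\ref{if_contracts}). This meets your own criterion that fibers over new points lie inside $A$. In the example, attaching $2$-cells along the generators of $\pi_1(B)$ yields a simply connected complex homotopy equivalent to $S^2$. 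In it $f|_{\partial A}$ has degree $0$ and is therefore null-homotopic.
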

{\bf Theorem~\ref{nocover1}'.} If the conditions a) and b) of theorem~\ref{nocover1} are altered to
\begin{itemize}
    \item [a)] $k=1$ and $\pi_1(\partial A)\to0\in \pi_1(M_i)$ for each $i$ (or $H^1(\partial A, \Z)\cong0$), or
    \item [b)] $k=n-1$, $\partial A$ is connected, and either $A$ or both $M_i\setminus \Int(A)$ are complete oriented with only boundary being $\partial A$ that is connected, or 
    \item [c)] $A$ is a ball $B^n$, then
\end{itemize}
$$\UW_k(M_1\#_A M_2)\leqslant \max_{i}(\UW_k(M_i))+\diam_{M_1\# M_2}(\partial A).$$
The methods of theorem~\ref{nocover1}, unlike theorem~\ref{nocover}, do not compound the terms in the estimate when applied to several connected sums at once, that is, when a single $M$ has multiple $A_i$ that join it to other separate $M_i$. We use this feature in the particular case of universal covers. In this case, we are able to show that certain macroscopic dimension bounds on the universal cover of a connect sum imply the same bound on the universal cover of each summand. However, we do not have a universal constant in the Urysohn width estimates, unlike the previously stated results. 

Denote the universal cover of $X$ by $\tilde{X}$; then

\begin{thm}\label{cover}
    Let $M$ and $M'$ be complete $n$-manifolds ($n\geqslant3$) each containing an embedded copy of a connected, compact $n$-manifold $A$ with boundary. Suppose $A$ and $\partial A$ are simply-connected and the distance function on $\partial A$ is at most $C(\geqslant 1)$ times larger in $\tilde{M}$ than what it is in $\widetilde{M\#_AM'}$. Then
    $$\UW_{1}(\widetilde{M})\leqslant C\UW_{1}(\widetilde{M \#_A M'})+2\diam_{\tilde{M}}(A).$$
    If, in addition, $M$ and $M'$ are without boundary, then the same estimate holds for $n-1$ width:
    $$\UW_{n-1}(\widetilde{M})\leqslant C\UW_{n-1}(\widetilde{M \#_A M'})+2\diam_{\tilde{M}}(A).$$
\end{thm}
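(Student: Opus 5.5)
Since $A$ and $\partial A$ are simply connected, van Kampen gives $\pi_1(M\#_AM')\cong\pi_1(M\setminus\Int A)*\pi_1(M'\setminus\Int A)$ with $\pi_1(M\setminus \Int A)\cong\pi_1(M)$ (removing a simply connected codimension-$0$ piece with simply connected boundary does not change $\pi_1$, using $n\geqslant3$). The plan is therefore to realise $\widetilde{M}$ as $\widetilde{M\#_AM'}$ with pieces cut off. Let $W=M\setminus\Int A$ and $\widetilde W\subset\widetilde M$ its preimage. Every lift of $\partial A$ to $\widetilde M$ bounds a lift of $A$, and these lifts are disjoint compact copies of $A$ isometric to $A$. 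On the other side, $\widetilde W$ also sits isometrically (as a Riemannian manifold with the induced length structure on each lift of $W$ inside the tree-of-spaces picture) inside $\widetilde{M\#_AM'}$. There every lift of $\partial A$ separates $\widetilde{M\#_AM'}$, and on the side away from $\widetilde W$ it is attached to a "branch" containing lifts of $M'\setminus\Int A$ and further copies. So $\widetilde M$ is obtained from $\widetilde{M\#_AM'}$ by deleting each branch $B_j$ hanging off a boundary component $\partial A_j$ and gluing in a copy $A_j$ instead.

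Take a proper map $f:\widetilde{M\#_AM'}\to P^k$ ($k=1$ or $n-1$) of width $w$ close to $\UW_k(\widetilde{M\#_AM'})$. I would define $g:\widetilde M\to P^k$ by $g=f$ on $\widetilde W$, and on each $A_j$ collapse appropriately, in the spirit of the proof of Theorem~\ref{nocover1} applied simultaneously at all the $A_j$; this is exactly the feature noted after Theorem~\ref{nocover1'} that the estimates do not compound. For $k=1$: $\partial A_j$ is simply connected, so $f|_{\partial A_j}$ lifts to the universal cover of a neighbourhood in the graph. Its image is then a tree-like set, and I would pick a point $p_j\in f(\partial A_j)$ and extend $g$ over $A_j$ by contracting to $p_j$ along geodesics in that tree. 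Alternatively, modify $P$ by coning off $f(\partial A_j)$, replacing it with a new vertex attached by an edge. For $k=n-1$ without boundary, I would use the degree/separation argument of Theorem~\ref{nocover1} b): the connected separating hypersurface $\partial A_j$ is mapped, and one attaches a new cell or a cone to $P$ over $f(\partial A_j)$ so that $A_j$ maps into that cone, staying $(n-1)$-dimensional.

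The width estimate comes next. A fiber of $g$ consists of a fiber of $f$ intersected with $\widetilde W$, together with possibly several whole $A_j$'s whose boundary meets that fiber. Points of $\widetilde W$ in one $f$-fiber are within distance $w$ in $\widetilde{M\#_AM'}$. A shortest path between them may pass through branches $B_j$, entering and exiting through the same $\partial A_j$ by the tree structure. Each such excursion is replaced by a path in $\widetilde M$ between its endpoints on $\partial A_j$, whose length is at most $C$ times the excursion length by hypothesis. This gives distance at most $Cw$ in $\widetilde M$; appending the attached copies $A_j$ on both ends adds at most $2\diam_{\tilde M}(A)$. Letting $w\to\UW_k$ gives both inequalities.

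I expect the main obstacle to be the extension over the $A_j$ without merging fibers of different $A_j$'s or enlarging fibers uncontrollably. For $k=1$ the danger is that $f(\partial A_j)$ is not contractible within small fibers; the first thing I would try is collapsing each connected component of a preimage fiber (Stein factorization style) so that $g$ restricted to $\partial A_j$ has tree image, using $H^1(\partial A_j)=0$. Checking properness of $g$ also needs care, but it follows since the $A_j$ are uniformly compact and locally finite.
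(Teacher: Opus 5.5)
Your architecture is the paper's (Lemmas~\ref{universals} and~\ref{multiple}). You decompose $\widetilde{M\#_AM'}$ as a tree of copies, replace each excursion into a branch by a path in $\tilde M$ at cost factor $C$, and pay $2\diam_{\tilde M}(A)$ for endpoints inside the glued-in $A_j$.

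For $k=1$, your geodesic contraction in the tree is a valid and slightly more economical alternative to the paper's fresh-cell construction. A path-connected subset of a tree contains the arc between any two of its points, so the contraction of the lift of $f|_{\partial A_j}$ never leaves that lift's image. Hence $g(A_j)=f(\partial A_j)\subset P$, and every fiber of $g$ meeting $A_j$ contains a point of $\partial A_j$ in the same $f$-fiber. That is exactly the property your width estimate uses; the paper gets it from Lemma~\ref{if_contracts}. You should state it explicitly. Your worry about contracting ``within small fibers'' is beside the point. Discard the alternatives of coning off or collapsing $f(\partial A_j)$. A cone over a graph is $2$-dimensional. Collapsing merges every fiber through $f(\partial A_j)$, which is the source of the factor $2$ in Theorem~\ref{nocover}, and chains of $A_j$'s with overlapping images then merge without bound.

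The genuine gap is the case $k=n-1$. A cone over $f(\partial A_j)$, or a new cell attached along it, is $n$-dimensional whenever $f(\partial A_j)$ contains an $(n-1)$-simplex, which you cannot rule out. The target is then no longer an $(n-1)$-complex.

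Nor does the tree trick carry over. The degree argument only gives a null-homotopy of $f|_{\partial A_j}$ after $P$ is enlarged by cells of dimension $\leqslant n-1$ to become $(n-2)$-connected. By Lemma~\ref{n-1-contraction}, the sole obstruction is then $f_*[\partial A_j]\in H_{n-1}(P)$. It vanishes because $\partial A_j$ bounds the complete oriented piece on either side and $f$ is proper. Such a null-homotopy leaves $f(\partial A_j)$ and runs through cells of $P$ whose $f$-preimages may lie arbitrarily far away in $\widetilde W$ or on other $\partial A_{j'}$. Mapping $A_j$ by it into $P$ therefore destroys the width bound.

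The missing idea is Lemma~\ref{if_contracts}. Map the collar of $\partial A_j$ by a disjoint copy $H_j'$ of the null-homotopy, glued to $P$ only along $f(\partial A_j)$, and send the rest of $A_j$ to the endpoint. Let $Q$ be the union of these pieces along $P$. Then $g(A_j)\cap P=f(\partial A_j)$, and fibers over $Q\setminus P$ lie inside a single $A_j$. With this, your estimate $C\,\W(f)+2\diam_{\tilde M}(A)$ goes through verbatim.
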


In particular, if $\widetilde{M \#_A M'}$ has macroscopic dimension bounded above by $1$ or by $n-1$, then the same is true for $\widetilde{M}$.

Let us comment on the constant $C$ in the theorem. The need for it arises from the fact that although we assume that the Riemannian metric on $\partial A$ is the same in every space it sits it, that may not be the case for the distance function, since there might be different shortcuts through different ambient spaces. If $\widetilde{M\setminus A}$ is geodesically convex in $\widetilde{M}$, then there are no shortcuts, implying $C=1$. In general, 
$$C=\sup_{x,y\in\partial A}\frac{d_{\tilde{M}}(x,y)}{d_{\widetilde{M\#_AM'}}(x,y)},$$ 
where $x$ and $y$ are meant to reside in a single copy of $\partial A$ in universal covers. Note that the compactness and smoothness of $\partial A$ guarantees $C<\infty$, and that $C\geqslant 1$, since on infinitesimal scale the distance function converges to that of $\partial A$. 

Another way around this technicality is to bound the number of copies of $\partial A$'s in $\widetilde{M\#_AM'}$ that may be intersected by a single fiber of an optimal projection $f:\widetilde{M\#_AM'}\to X^{n-1}$. For example, if the distance between different copies of $\partial A$ in $\widetilde{M\#_AM'}$ is more than $\UW_{n-1}(\widetilde{M\#_AM'})$, then the situation is essentially reduced to the settings of theorem~\ref{nocover1} and we get analogous estimates for the covers (this is addressed in remark~\ref{sparse}).

Now the reverse inequality:

{\bf Theorem~\ref{cover}'.} 
Let $M$ and $M'$ be complete $n$-manifolds each containing an embedded copy of a simply-connected, compact $n$-manifold $A$ with simply-connected boundary $\partial A$. Suppose the distance function on $\partial A$ is at most $C'(\geqslant 1)$ times larger in $\widetilde{M\#_AM'}$ than what it is in $\widetilde{M}$. Then
$$\UW_k(\widetilde{M\#_AM'})\leqslant C'\max_i(\UW_k (\widetilde{M_i}))+2\diam_{\widetilde{M\#_AM'}}(\partial A)$$
as long as one of the following conditions hold:
\begin{itemize}
    \item[a)] $k=1$, or
    \item[b)] $k=n-1$ and $M$ and $M'$ are without boundary, or
    \item[c)] $A$ is a ball $B_n.$
\end{itemize}

Note that $C'$ in this theorem is different from $C$ in the original, as it compares metrics on $\partial A$ in the opposite direction. That is,
$$C'=\sup_{x,y\in\partial A}\frac{d_{\widetilde{M_1\#_AM_2}}(x,y)}{d_{\widetilde{M_i}}(x,y)},$$
where $x$ and $y$ are meant to reside in a single copy of $\partial A$ in universal covers. 
 
We note that a non-quantitative version of Theorem \ref{cover} for $n-1$-width was proven for connect sums in (\cite{di2025curvature}, Theorem 7.14) using a different technique. 

The paper is organized as follows. In Section~\ref{nocover_section} we provide the proof of Theorem \ref{nocover}. In Section~\ref{contract_section}, we prove Theorem \ref{nocover1}, and in Section~\ref{cover_section'}, we show how to adapt the proof of Theorem \ref{nocover1} to prove Theorem \ref{cover}. In Section~\ref{counter_section} we list counterexamples to eliminating various unfortunate constants in our estimates.

The authors would like to thank Yevgeny Liokumovich for many helpful conversation regarding this paper.

\section{Simple ``No Covers'' Case: collapsing $A$}
\label{nocover_section}
In this section, we prove Theorem \ref{nocover} using a straightforward approach. We first need a lemma which allows us to assume that the maps which we use to measure Urysohn width are simpicial maps. This is taken from (\cite{balitskiy2022waist}, Lemma 3.9).
\begin{lem}\label{simp}
    Suppose $X$ is a topological space with a metric and a simplicial structure\footnote{that is, the metric doesn't have to be flat on simplices} and $h: X \to K$ is a proper map into a simplicial complex $K$ with $\diam(h^{-1}(z))<W$ for all $z \in K$. Then there is a simplicial proper map $h': X' \to K'$ such that $\diam((h')^{-1}(z))<W$ for all $z \in K'$ (where $X'$ and $K'$ are subdivisions of $X$ and $K$).
\end{lem}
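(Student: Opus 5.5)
The strategy is to approximate $h$ by a simplicial map without increasing fiber diameters, using compactness of fibers and properness of $h$ to obtain a uniform margin below $W$. First I would replace the strict inequality by a locally uniform margin. For each $z\in K$ the fiber $h^{-1}(z)$ is compact, because $h$ is proper, and its diameter is $<W$. Properness also gives upper semicontinuity of fibers: for every $\e>0$ there is a neighborhood $U_z$ of $z$ with $h^{-1}(U_z)\subset N_\e(h^{-1}(z))$. Choosing $\e$ small in terms of $z$, every set $h^{-1}(U)$ with $U\subset U_z$ then has diameter $<W-\delta_z$ for some $\delta_z>0$. After subdividing $K$ to a subdivision $K'$, I may assume every open star $\mathrm{st}(v)$ of a vertex $v$ of $K'$ lies in some $U_z$. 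This is done locally finitely, by subdividing each simplex finely enough, which suffices on the compact pieces. Consequently $\diam h^{-1}(\mathrm{st}(v))<W$ for every vertex $v$ of $K'$.

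Next I would subdivide $X$ to $X'$ so that each closed star of a vertex $x$ of $X'$ maps into some $\mathrm{st}(v)$ with $v\in K'$. This is the usual simplicial approximation setup; the subdivision can be made locally finitely by properness and compactness of simplices. I would then define $h'$ on vertices by sending $x$ to such a $v$ and extend linearly. The standard simplicial approximation theorem makes $h'$ simplicial, with $h'(p)$ lying in the closed carrier simplex of $h(p)$. In particular $h'$ is within one simplex of $h$, so it is proper, since preimages of compact sets lie in $h^{-1}$ of a slightly larger compact set.

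The remaining step is to bound the fibers of $h'$. If $h'(p)=z'$, then $z'$ lies in the open simplex $\sigma$ spanned by vertices $v_0,\dots,v_m$, and the vertices of the carrier of $p$ all map into $\{v_0,\dots,v_m\}$ with at least one hitting $v_0$. Hence $p$ lies in the closed star of some vertex $x$ with $h(\mathrm{st}(x))\subset \mathrm{st}(v_0)$, so $h(p)\in\mathrm{st}(v_0)$. Therefore $(h')^{-1}(z')\subset h^{-1}(\mathrm{st}(v_0))$, which has diameter $<W$.

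The main obstacle is the first step: producing the cover by stars with preimage diameter strictly below $W$ in a noncompact setting. This is where properness is essential, and I would argue it carefully using compact exhaustions of $K$.
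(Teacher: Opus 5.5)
Your proposal is correct and follows essentially the same route as the paper: subdivide $K$ so that the preimage of every open vertex star has diameter $<W$, take a simplicial approximation $h'$ with $h'(p)$ in the closed carrier of $h(p)$, and observe that each fiber of $h'$ lies in $h^{-1}$ of a single open star. Your version also justifies two points the paper takes for granted: the existence of such a $K'$, via compactness of fibers and closedness of the proper map $h$, and the properness of $h'$, which as you argue it relies on $K'$ being locally finite.
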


\begin{proof}
    Let $K'$ be a subdivision of $K$ such that the preimage of the open star $S_v$ of every vertex $v \in K'$ has diameter less than $W$. Use the simplicial approximation theorem to approximate $h$ by a simplicial (for some subdivision of $X$) map $h'$ such that for each $x\in X'$, $h'(x)$ is contained in the minimal closed cell containing $h(x)$. It follows that each fiber of $h'$ is contained in the preimage of some open star $S_v$ in $K'$, and hence has diameter less than $W$.
\end{proof}

Now we prove Theorem \ref{nocover}.
\begin{proof}[Proof of theorem~\ref{nocover}]
    Suppose $f: M_1 \#_A M_2 \to P^k$ is a proper map with finite width $W$, which we pick arbitrarily close to $\UW_k(M_1 \#_A M_2)$. We want to extend this map from $M_1\setminus A$ to the whole $M_1$ by sending all of $A$ to a single point. Since $f$ is already defined on $\partial A$, its image $f(\partial A)$ needs to be collapsed in $P^k$. We'd like the resulting space $P^k/f(\partial A)$ to still be a simplicial $k$-complex; the rest of the paragraph will deal with this technicality. By lemma~\ref{simp} we may assume that $f$ is simplicial for some triangulation of the pair $(M_1 \#_A M_2,\partial A)$. Then $f(\partial A)$ is a subcomplex of $P^k$ and the quotient $\widehat{P}=P/f(\partial A)$ can be endowed with a simplicial structure of $k$-complex (the quotient map itself will not be simplicial, but we don't need that). 
    
    Now we have a map $\hat{f}:M_1\to \widehat{P}$ that behaves like $f$ on $M_1\setminus A$, and sends all $A$ to one point~$p_A$. To estimate the size of the fibers of this map, we need to consider a few cases. Let $x_1, x_2$ be two arbitrary points of a fiber $F_{\hat{p}}:=\hat{f}^{-1}(\hat{p})$ for some $\hat{p}\in\hat{P}$.
    \begin{itemize}
        \item If $\hat{p}\neq p_A:=f(\partial A)/f(\partial A)$, then $F_{\hat{p}}$ is a preimage of a single $p\in P$. The length of a minimizing geodesic $\gamma\subset M_1\#_AM_2$ from $x_1$ to $x_2$ is at most $W$ since they are in the same fiber $f^{-1}(p)$. If $\gamma$ does not intersect $M_2\setminus A$, then the copy of $\gamma$ in $M_1$ connects $x_1$ and $x_2$ by length $\leqslant W$. If $\gamma$ does intersect $M_2\setminus A$, not all of $\gamma$ is present in $M_1\setminus A$, and $d(x_1,x_2)$ there might differ from what it was in $M_1\#_AM_2$. In that case, let $y_1$ and $y_2$ be the first and last point of $\partial A$ in $\gamma$, so that the arcs $[x_i, y_i]$ of $\gamma$ lie entirely in $M_1\setminus \Int(A)$. Then
        $$d_{M_1}(x_1,x_2)\leqslant \bigl(d_{\gamma}(x_1,y_1)+d_{\gamma}(y_2,x_2)\bigr)+d_{M_1}(y_1,y_2)\leqslant$$
        $$\leqslant d_{\gamma}(x_1,x_2)+\diam_{M_1}(A)\leqslant W+\diam_{M_1}(A).$$
        
        \item If $\hat{p}= p_A$, then $f(x_i)\subset f(\partial A)$, so let $w_i\in\partial A$ be such that $f(w_i)=f(x_i)$. 
        
        Assume both $x_i$ are outside $A$. Let $\gamma_i$ be a minimizing geodesic in $M_1\#_AM_2$ from $x_i$ to $w_i$ and let $y_i$ be the first point of $\partial A$ in $\gamma_i$ so that $[x_i,y_i)\subset M_1\setminus A$. Then
        $$d_{M_1}(x_1,x_2)\leqslant  d_{\gamma}(x_1,y_1)+d_{M_1}(y_1,y_2)+d_{\gamma}(y_2,x_2)\leqslant$$
        $$\leqslant d_{\gamma}(x_1,w_1)+\diam_{M_1}(A)+d_{\gamma}(w_2,x_2) \leqslant 2W+\diam_{M_1}(A).$$
        If $x_i\notin A$, but $x_j\in A$, then
        $d_{M_1}(x_i,x_j)\leqslant  d_{\gamma}(x_i,w_i) +d_{M_1}(y_i,x_j)\leqslant W+\diam_{M_1}(A)$.
        
        If both $x_i$ are in $A$, $d_{M_1}(x_1,x_2)\leqslant \diam_{M_1}(A)$
    \end{itemize}

\end{proof}
\begin{proof}(of Theorem \ref{nocover}')
If $p_i:M_i\to P_i^k$ are maps with widths $\e$-close to $\UW_k(M_i)$, then we can define a map $p_\#$ from $M_1\#_A M_2$ to $P_1$ and $P_2$ glued along $p_i(\partial A)$. On $M_i\setminus \Int(A)$ we define $p_\#$ as the restriction of~$p_i$. Then the fibers of $p_\#$ are, at worst, fibers of $p_i$, joined along $\partial A$, so to get from one point of $p_\#^{-1}(x)$ to another we get at worst a chain (fiber of $p_i$)-($\partial A$)-(fiber of $p_j$). The more careful distance calculations are identical to those of theorem~\ref{nocover}.
\end{proof}

\section{Refined ``No Covers'' Case: contracting $\partial A$}
\label{contract_section}

The reason for the coefficient 2 in the theorem~\ref{nocover} was that when we build a map $M_1\to \hat{P}^k$ from a map $M_1\#_AM_2\to P^k$, we collapsed all $A$ to a single point, so all fibers that originally intersected $\partial A$ became a single larger fiber. This mixing of fibers can be avoided when extending the map to $A$, if instead of collapsing $A$ we use a null-homotopy of $\partial A$ in $P^k$. The following lemma then allows to extend the mapping to a collar neighborhood in a useful way.

\begin{lem}\label{if_contracts}
    Let $f:M\to P$ be a proper simplicial mapping to a complex $P^{k>0}$ and let $\Sigma$ be a compact collection of connected components of $\partial M$ such that $f|_{\Sigma}$ is contractible. Let $M_\Sigma$ be $M$ with a collar neighborhood $\Sigma\times [0,1]$ attached to $M$ (by $\Sigma\times \{1\}$). Then there exists a proper simplicial mapping $f_\Sigma:M_\Sigma\to Q^k$ for some $Q^k\supset P^k$ such that $f_\Sigma|_M=f$, $\Sigma\times\{0\}$ is a fiber $f_\Sigma^{-1}(p_0)$, and any fiber that intersects both $M$ and $\Sigma\times[0,1]$ goes through $\Sigma$: $f_\Sigma(\Sigma\times[0,1])\cap P^k=f(\Sigma)$.
\end{lem}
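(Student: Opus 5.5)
The natural construction is to build $Q^k$ by gluing a mapping cone of $f(\Sigma)$ onto $P$. Write $S:=f(\Sigma)$. Since $f$ is simplicial and $\Sigma$ is compact, $S$ is a finite subcomplex of $P$ of dimension at most $k$. I would replace $S$ by a $k$-dimensional object through which we can contract. Take the cylinder $S\times[0,1]$ and crush $S\times\{0\}$ to a point $p_0$; this gives the cone $CS$. Set
$$Q:=P\cup_{S\times\{1\}} CS.$$
The cone has dimension $\dim S+1$, which may be $k+1$. So I would not use the cone directly. Instead I would use the hypothesis that $f|_\Sigma$ is contractible, together with $k>0$.

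The contractibility gives a null-homotopy $H:\Sigma\times[0,1]\to P$ with $H(\cdot,1)=f|_\Sigma$ and $H(\cdot,0)\equiv q$. I cannot simply map the collar into $P$ by $H$: that would merge fibers of the collar with fibers of $M$ away from $S$. That merging is exactly what the statement $f_\Sigma(\Sigma\times[0,1])\cap P=f(\Sigma)$ forbids.

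The fix is to take a fresh copy of the image of the homotopy. By simplicial approximation (after subdividing $\Sigma\times[0,1]$ and $P$ relative to $\Sigma\times\{1\}$, where $H$ is already simplicial), make $H$ simplicial. Let $T\subset P$ be its image, a finite subcomplex of dimension $\le k$ containing $S$ and $q$.

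Now form $Q:=P\sqcup T'/\!\sim$, where $T'$ is an abstract copy of $T$ glued to $P$ only along the copy $S'\subset T'$ of $S$. This means identifying $S'$ with $S\subset P$ and nothing else. Then $Q$ is a simplicial $k$-complex containing $P$. Define $f_\Sigma$ to be $f$ on $M$ and the copy $H'$ of $H$ into $T'$ on the collar. Then $f_\Sigma(\Sigma\times[0,1])\cap P\subset T'\cap P=S=f(\Sigma)$, which is what the statement requires.

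The obstacle here is that $H$ might pass back through $S$ at intermediate times. In the copy $T'$, such points land on $S'=S$ and would merge with fibers in $M$. That still satisfies $f_\Sigma(\Sigma\times[0,1])\cap P=f(\Sigma)$, but it would spoil the fiber estimates the lemma is meant to support. To avoid this, I would glue $T'$ along $S$ only at time $1$. Concretely, use a reparametrized mapping-cylinder-type space: first the cylinder $S\times[1/2,1]$ collapsed along the simplicial map structure, then $T'$ attached at level $1/2$. This keeps the collar's image disjoint from $P$ except over $\Sigma\times\{1\}$.

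The dimension issue for the cylinder $S\times[1/2,1]$ is the delicate point. I expect to handle it by using the mapping cylinder of $f|_\Sigma:\Sigma\to S$ only over the $k$-skeleton. Alternatively, and more simply, I would insert a thin collar whose image is just $S$ itself, a copy $S''$ glued by the identity. This is $k$-dimensional and works because the collar level $t\in[1/2,1]$ is then mapped by $f\circ\mathrm{pr}_\Sigma$ into $S''$.

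Finally, I need $\Sigma\times\{0\}$ to be an entire fiber. $H'$ sends $\Sigma\times\{0\}$ to the point $q'$, but other collar points may also map to $q'$. I would fix this by adjoining one new edge $[q',p_0]$ (dimension $1\le k$, using $k>0$), reparametrizing the collar so that $\Sigma\times[0,\varepsilon]$ maps onto this edge by $t\mapsto$ position along it. Then $f_\Sigma^{-1}(p_0)=\Sigma\times\{0\}$ exactly.

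Properness holds because the collar is compact and maps into a finite subcomplex, while on $M$ the map is just $f$. Simpliciality holds after a final common subdivision. The main step I expect to need care with is this bookkeeping that keeps the collar's image disjoint from $P$ except along $f(\Sigma)$ while staying $k$-dimensional.
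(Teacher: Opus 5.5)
Your core construction is exactly the paper's proof, and it already gives every conclusion of the lemma: make the null-homotopy $H$ simplicial, glue a fresh copy of its image to $P$ only along $f(\Sigma)$, set $f_\Sigma=f$ on $M$ and the copied $H'$ on the collar, and attach a new vertex $p_0$ by an edge so that $\Sigma\times\{0\}$ is a whole fiber.

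Drop the extra step that tries to keep intermediate collar levels from mapping into $f(\Sigma)$, for two reasons.
\begin{itemize}
\item It is not needed. In the applications the collar lies in $A$, or in a thin neighborhood of $\partial A$, so such fiber points are already controlled by the $\diam$ term.
\item It cannot hold in a $k$-complex. If $f(x_0)$ is interior to a $k$-simplex of $f(\Sigma)$, that open simplex is open in $Q^k$, and continuity forces collar points near $(x_0,1)$ into $P$. Your ``thin collar onto $S''$ glued by the identity'' indeed lands in $P$ by design.
\end{itemize}
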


\begin{proof}
    Let $H: \Sigma\times[0,1] \to P^k$ be a null-homotopy of $f=:f_1$ to a constant mapping $f_0:\Sigma \to p_0\in P^k$. By taking simplicial approximation, $H$ can be made simplicial as well. We may also assume $f^{-1}(p_0)=\varnothing$ and $H^{-1}(p_0)=\Sigma\times\{0\}$ by having $p_0$ to be a new point, attached to $P^k$ by an edge. 
    
    We would like to define $f$ on $\Sigma\times[0,1]$ as $H$. However, if some original fiber $f^{-1}(p)$ was far away from $\Sigma$, but $H$ also had some nontrivial fiber $H^{-1}(p)\neq\varnothing$ over the same point, then we would have no control over the size of this fiber. To avoid this situation, we add to $P^k$ extra cells that $H$ could use exclusively. That is, let $P'$ be a copy of $P$ (with corresponding copies $f'$ and $H'$ of $f$ and $H$) and define
    $$Q^k:=P^k\bigcup H'(\Sigma\times[0,1])/\sim$$
    where the equivalence $\sim$ identifies $f(\Sigma)$ of the first copy and $H'(\Sigma\times \{1\})=f'(\Sigma)$ of the second. Then we can define $f_\Sigma:M\cup_\Sigma(\Sigma\times[0,1])\to Q^k$ as $f$ on $M$ and as $H'$ on $\Sigma\times [0,1]$. Now any fiber that is both in $M$ and in $\Sigma\times [0,1]$ has to go through $\Sigma$, since it has to be over a point $x$ that is both in $P^k$ and $H'(\Sigma\times [0,1])$, which by construction of $Q^k$ means $x\in f(\Sigma)$.
\end{proof}

To apply this construction to our case, we need to ensure contractibility on $\partial A$:

\begin{lem}\label{all-contraction}
    Let $\Sigma^{n-1}\subset \partial M^n$ be a compact union of some connected components $\Sigma_i$ of $\partial M$. If $f:M\to P^k$ is a proper simplicial map to a complex $P^k$, then $f|_\Sigma$ is contractible in some $Q^k\supset P^k$, provided that either
    \begin{itemize}
        \item $k=1$ and $\pi_1(\Sigma_i)\to 0\in \pi_1(M)$ for each $i$, or $H^1(\Sigma,\Z)\cong 0$, or
        \item $k=n-1$, $M$ is complete oriented with $\dim(M)\geqslant 3$, and $\partial M=\Sigma$ is compact and connected.
    \end{itemize}
\end{lem}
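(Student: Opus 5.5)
The plan is to treat the two cases separately, since they rest on different obstructions. In both cases the freedom to enlarge $P^k$ to $Q^k$ will be used to kill the relevant obstruction: we attach $k$-cells (or, for $k=1$, contractible trees) so that a map which is not null-homotopic in $P^k$ becomes so in $Q^k$. The proof reduces to checking that the obstruction for $f|_\Sigma$ is a class that becomes trivial after such an enlargement, and that cells of dimension $\le k$ suffice.

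\emph{Case $k=1$.} A $1$-complex $P$ is homotopy equivalent to a wedge of circles, so $P$ is an Eilenberg--MacLane space $K(\pi_1P,1)$. Work one component $\Sigma_i$ at a time. A map $\Sigma_i\to P$ is null-homotopic iff the induced map $\pi_1(\Sigma_i)\to\pi_1(P)$ is trivial.

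Under the first hypothesis, $\pi_1(\Sigma_i)\to\pi_1(M)$ is trivial. Since $(f|_{\Sigma_i})_*$ factors as $\pi_1(\Sigma_i)\to\pi_1(M)\to\pi_1(P)$, it is trivial as well, so each $f|_{\Sigma_i}$ is null-homotopic already in $P$. We then join the images by arcs, attaching a tree if $P$ is disconnected, so that all components contract to a single point $p_0$; this gives $Q\supset P$ with $Q$ still $1$-dimensional.

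Under the second hypothesis, $H^1(\Sigma,\Z)=0$. Homomorphisms $\pi_1(\Sigma_i)\to\pi_1(P)$ land in a free group. A homomorphism to a free group with nontrivial image has infinite cyclic image on some element, hence gives a nonzero map $\pi_1(\Sigma_i)\to\Z$, i.e.\ a nonzero class in $H^1(\Sigma_i,\Z)$. Concretely, subgroups of free groups are free, so a nontrivial image surjects onto $\Z$. Thus every such homomorphism is trivial, and we conclude exactly as before.

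\emph{Case $k=n-1$.} Here the plan is obstruction theory. Since $\dim\Sigma=n-1=k$, the map $f|_\Sigma$ into a $k$-complex is null-homotopic in some $Q\supset P$ obtained by attaching cells of dimension $\le k$, provided the relevant top-dimensional obstruction vanishes. Roughly, $\Sigma$ can be compressed onto its $(n-2)$-skeleton, and the obstruction lives in $H^{n-1}(\Sigma;\pi_{n-1})$, i.e.\ in the fundamental class pushed forward as $f_*[\Sigma]\in H_{n-1}(P)$.

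The key input is that $\Sigma=\partial M$ with $M$ oriented, so $[\Sigma]=\partial[M]$ in locally finite homology. Since $f$ is proper, $f_*[\Sigma]=\partial f_*[M]$, and $f_*[M]$ is an $n$-chain on the $(n-1)$-dimensional complex $P$, which is zero. Hence $f_*[\Sigma]=0$ in $H_{n-1}(P)$. Equivalently, $f(\Sigma)$ carries no top-dimensional cycle: each $(n-1)$-simplex of $P$ is covered with degree zero.

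I would then build the null-homotopy explicitly. Take the cone $C\Sigma$ and try to extend $f|_\Sigma$ over it into $P\cup(\text{new cells})$ of dimension $\le n-1$. The approach is to homotope $f|_\Sigma$, using the degree-zero property on each top simplex, so that it misses an interior point of every $(n-1)$-simplex. The map then deformation retracts onto the $(n-2)$-skeleton of $P$. From there, induct down skeleta, attaching new cells as in Lemma~\ref{if_contracts} (a copy of $P$ plus a cone of dimension $\le k$ on the lower-dimensional image) to contract. Connectedness of $\Sigma$ and $n\ge3$ are used to make the degree argument work component-wise and to ensure the cone on an $(n-2)$-dimensional image has dimension $\le n-1$.

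The main obstacle is this last step: passing from the homological vanishing $f_*[\Sigma]=0$ to an actual homotopy pushing $f|_\Sigma$ off the top cells. Degree zero over a simplex does not automatically let us remove preimages. I would first try to cancel preimage sheets pairwise along arcs in $\Sigma$, which is possible since $\Sigma$ is connected and of dimension $\ge2$, by a standard Hopf-type argument. If that fails, I would instead cone off $f(\Sigma)$ in a copy of $P$, accepting a cone of dimension $\le n-1$ after retraction.
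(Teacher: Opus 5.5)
Your $k=1$ case is the paper's argument. Your homological step for $k=n-1$ is also sound: properness and $\dim P=n-1$ force $f_\#$ of the fundamental chain of $M$ to vanish, so $f_*[\Sigma]=0$ even at the chain level (every top simplex is covered with degree $0$). This is a slightly cleaner form of the paper's cap-product computation in Lemma~\ref{n-1-contraction}.

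The gap is the passage from $f_*[\Sigma]=0$ to a null-homotopy. Identifying the obstruction with $f_*[\Sigma]\in H_{n-1}(P)$ is only legitimate when the target is $(n-2)$-connected. Only then are there no lower obstructions and does Hurewicz give $\pi_{n-1}\cong H_{n-1}$. Your plan to homotope $f|_\Sigma$ inside $P$ off every top simplex by Hopf-type cancellation fails in general, because the image of a cancelling arc can be a nontrivial loop in $P$. Concretely, let $M=(S^1\times S^2)\setminus \Int B^3$ and let $f:M\to P=S^1\vee S^2$ be a homotopy equivalence. Then $f|_{\partial M}$ represents (up to a unit) $(1-t)\iota\neq 0$ in $\pi_2(P)\cong\Z[t^{\pm1}]$ and has degree $0$ on the $2$-cell. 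Yet it cannot be pushed into the $1$-skeleton, since $\pi_2(S^1)=0$ would then make it null-homotopic. Your fallback of coning off $f(\Sigma)$ gives an $n$-dimensional cone unless that retraction has already been achieved, so it does not close the gap.

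The missing idea is to enlarge $P$ \emph{before} contracting. Attach cells of dimensions $2,\dots,n-1$ killing $\pi_1,\dots,\pi_{n-2}$, so that $Q$ is an $(n-2)$-connected $(n-1)$-complex, hence $Q\simeq\bigvee S^{n-1}$. Because $n-1\geqslant 2$, $Q$ is simply connected and $\pi_{n-1}(Q)\cong H_{n-1}(Q)$. So for closed connected oriented $\Sigma$, the Hopf--Whitney theorem says the only obstruction is $f_*[\Sigma]\in H_{n-1}(Q)$, which your argument kills; this is the paper's route.

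This, not the dimension of a cone, is where $n\geqslant 3$ is needed. For $n=2$, a punctured torus mapped onto $S^1\vee S^1$ sends the boundary to a commutator, which has degree $0$ on every edge but is essential in every graph containing $P$.

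Alternatively, you can repair it with your own ingredients: attach only the cone on $f(\Sigma)^{(n-2)}$, which has dimension $\leqslant n-1$. Then $f(\Sigma)\cup C\bigl(f(\Sigma)^{(n-2)}\bigr)\simeq\bigvee_\tau S^{n-1}_\tau$ over the top simplices $\tau$ of $f(\Sigma)$. Your degree-zero statement says every degree vanishes, and Hopf--Whitney gives the null-homotopy there.
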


\begin{proof}
    If $k=1$ then $P^k$ is a graph. If $\pi_1(\Sigma_i)\to 0\in \pi_1(M)$ then by composition $f_*:\pi_1(\Sigma)\to\pi_1(P^1)$ is 0 and $f|_{\Sigma_i}$ can be contracted by induction over skeleta since all obstructions to that are 0, and then all $f(\Sigma_i)$ brought together, adding, if necessary, edges to make $P^1$ connected. Another condition $H^1(\Sigma,\Z)=0$ also prohibits non-contractible maps into connected graphs. It is enough to check that for a single component $\Sigma_i$. Assume there is such map $f:\Sigma_i\to P^1$. Then $\pi_1(P^1)$ is free, and so is any subgroup of it, in particular $I=f_*(\pi_1(\Sigma_i))$. If $f|_{\Sigma_i}$ is not contractible, then $I$ is a non-trivial group; being free, it admits a surjective map $g:I\to\Z$. Then the composition
    $$\pi_1(\Sigma_i)\stackrel{f_*}{\to} f_*(\pi_1(\Sigma_i))=I\stackrel{g}{\to}\Z$$
    is surjective, contradicting $\Hom(\pi_1(\Sigma_i),\Z)\cong\Hom(H_1(\Sigma_i,\Z),\Z)$ being zero due to universal coefficients theorem. This concludes the proof in the case $k=1$.

    Now assume instead that $k=n-1$ and $M$ is complete oriented with connected compact boundary $\partial M=\Sigma$. Let $f:M\to P^{n-1}$ be a proper map. By adding cells of dimension $\leqslant n-1$ to $P^{n-1}$ we may set $\pi_i(P^{n-1})\cong 0$ for $i<n-1$, meaning that $P^{n-1}$ becomes homotopy equivalent to a wedge of spheres $\bigvee_i S_i^{n-1}$. This ensures that $f$ is contractible on $\partial A$ according to the following lemma~\ref{n-1-contraction} (where we label $m=n-1$).       
\end{proof}

\begin{lem}\label{n-1-contraction}
    Let $M$ be a complete oriented manifold of dimension $m+1\geqslant 3$ with compact connected boundary $\partial M$, and $P^{m}$ is a simplicial complex, homotopy equivalent to $\bigvee_i S_i^{m}$. If $f:M\to P^{m}$ is a proper mapping, then $f|_{\partial M}$ is null-homotopic.
\end{lem}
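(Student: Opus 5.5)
The plan is to reduce null-homotopy of $f|_{\partial M}$ to the vanishing of a single homology class, and then to kill that class using the fact that $\partial M$ bounds $M$ and $f$ is proper.

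\textbf{Step 1: reduction to homology.} Since $P^m\simeq\bigvee_i S_i^m$ with $m\geqslant 2$, the complex $P$ is $(m-1)$-connected and $\pi_m(P)\cong H_m(P;\Z)$ by Hurewicz. The boundary $\partial M$ is a closed connected oriented $m$-manifold, so it has the homotopy type of an $m$-dimensional CW complex. By the Hopf--Whitney classification (elementary obstruction theory), homotopy classes of maps $\partial M\to P$ correspond bijectively to
$$H^m(\partial M;\pi_m(P))\cong H^m(\partial M;H_m(P))\cong H_m(P).$$
The primary obstruction class is identified, via Poincaré duality for $\partial M$, with $f_*[\partial M]\in H_m(P;\Z)$. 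This uses $H^m(\partial M;G)\cong G$, which holds because $\partial M$ is closed, connected and oriented. So it suffices to show $f_*[\partial M]=0$ in $H_m(P)$.

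\textbf{Step 2: $f_*[\partial M]$ dies in locally finite homology.} $M$ may be noncompact, so I would work with locally finite (Borel--Moore) simplicial homology $H^{lf}_*$. This is functorial for proper simplicial maps, and we may take $f$ simplicial by lemma~\ref{simp}. The oriented manifold $M$ has a fundamental class $[M,\partial M]\in H^{lf}_{m+1}(M,\partial M)$, given by the infinite sum of its oriented top simplices. Its boundary is $[\partial M]\in H_{m}(\partial M)=H^{lf}_m(\partial M)$, which holds since $\partial M$ is compact. Concretely, the locally finite chain $f_\#[M]$ has boundary $f_\#[\partial M]$. Hence the image of $f_*[\partial M]$ under the natural map $H_m(P)\to H^{lf}_m(P)$ is zero.

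\textbf{Step 3: injectivity of $H_m(P)\to H^{lf}_m(P)$.} Because $P$ has no simplices of dimension $m+1$, there are no boundaries in top degree. Therefore $H_m(P)$ is the group of finite simplicial $m$-cycles and $H^{lf}_m(P)$ is the group of locally finite $m$-cycles, and the natural map is just the inclusion of one into the other, hence injective. Combined with Step 2, $f_*[\partial M]=0$, and Step 1 gives that $f|_{\partial M}$ is null-homotopic.

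\textbf{Main obstacle.} The step I expect to need the most care is Step 2, namely the rigorous use of locally finite chains. One must check that $f_\#$ of the infinite fundamental chain is still a locally finite chain in $P$, which is exactly where properness of $f$ enters. One must also check that the simplicial (rather than smooth) structure of $M$ near $\partial M$ causes no trouble. If this becomes awkward, a fallback is to truncate. Take a large compact submanifold $K\subset M$ containing $\partial M$ with $\partial K=\partial M\sqcup N$, and compare $f_*[\partial M]$ with $f_*[N]$ in $H_m(P)$. The two are equal because $\partial M$ and $N$ cobound $K$. Then exhaust $M$ by such $K$: properness pushes $f(N)$ off any fixed finite subcomplex supporting a nonzero cycle. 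Since $m$-cycles in an $m$-complex are determined by their coefficients on simplices, this forces $f_*[\partial M]=0$.
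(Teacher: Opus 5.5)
Your proposal is correct and follows essentially the paper's route. It uses the same obstruction-theoretic reduction to showing $f_*[\partial M]=0$ in $H_m(P)\cong\pi_m(P)$, and then kills this class because $[\partial M]$ is the boundary of the Borel--Moore fundamental class of $M$, with properness of $f$ making this legitimate for noncompact $M$. The only difference is a matter of duality: you push $[M]$ forward as a locally finite chain and use that $H_m(P)\to H^{lf}_m(P)$ is injective because $P$ has no $(m+1)$-simplices, whereas the paper caps against the compactly supported pullbacks $f^*[C_i]$ of top-dimensional cocycles, whose coboundaries vanish for the same dimensional reason.
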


\begin{proof}
    We first sketch the geometric idea of the argument. 
    
    If $M$ is a complete $m+1$-dimensional manifold with compact boundary, and $P=\bigvee_iS_i^m$, then smooth maps to $P$ can be visualized by the preimages of regular values $p_i\in S_i^m$, which will be $m$-codimensional submanifolds. In $M$ that will be a collection of arcs with both endpoints on $\partial M$ if the map is proper. If $M$ is oriented and $\partial M$ is connected, the endpoints of each arc make equal and opposite contributions to the ``degree'' of the mapping $f|_{\partial M}$, so the ``total degree'' has to be zero.
    
    Now we make this argument more rigorous. Since $P^m\sim \bigvee_i S_i^m$ we can pick cells $C_i\subset P$ such that the fundamental classes $[C_i]\in H^m(P,P\setminus \Int (C_i))$ form the basis of $H^m(P^m)$. The obstruction to contracting a map $f|_{\partial M}$ is zero for skeleta up to dimension $m-1$ since $\pi_k(\bigvee_iS_i^m)\cong0$ for $k<m$. In dimension $m$ the obstruction lives in 
    $$H^m(\partial M, \pi_m(\bigvee_iS_i^m))\cong\pi_m(\bigvee_iS_i^m)\cong H_m(\bigvee_iS_i^m)\cong\bigoplus_i\Z_i$$
    where in first equality we use that $\partial M$ is a closed, connected and orientable $m$-manifold, so that $H^m(\partial M,G)\cong G$, and in the second we use $m>1$ for Hurewicz isomorphism. Under this isomorphisms the obstruction becomes the image $f_*([\partial M])\in H_m(P)$ of the fundamental class $[\partial M]$, and it is enough to check that it pairs to 0 with each $[C_i]$. That is true since $[\partial M]$ is a boundary:
    $$f_*([\partial M])\frown [C_i]= \partial[M]\frown f^*([C_i])=[M]\frown d(f^*([C_i]))=[M]\frown f^*(d[C_i])=[M]\frown f^*(0)=0.$$
    This calculation holds for ordinary homology if $M$ is compact, but if it's not, then $[M]$ only exists in Borel-Moore homology, and in that case $f^*([C_i])$ would have to be present in cohomology with compact support for pairings to make sense. But that is ensured by $f$ being a proper map. Thus, all obstructions to contract $f|_{\partial M}$ are 0, meaning, it is contractible.
\end{proof}

\begin{proof}[Proof of theorem~\ref{nocover1}]

Consider a proper mapping $f:M_1\#_AM_2\to P^k$ of width $\W(f)$ arbitrarily close to $\UW_{k}(M_1\#_AM_2)$ (we may assume $f$ to be simplicial by lemma~\ref{simp}). By lemma~\ref{all-contraction} we can add simplicies to $P^{k}$ to make $f$ contractible on the boundary $\partial A$, and lemma~\ref{if_contracts} allows to add some more simplicies and extend $f$ from $M_1\setminus A$ to a collar neighborhood of $\partial A$ inside $A$ (and further as a constant map to the rest of $A$) to get $f_A:M_1\to Q^k$, so that $f_A(A)$ and $f_A(M_1\setminus A)$ intersect only by $f_A(\partial A)$.

We claim that the width $\W(f_A)$ is at most $\W(f)+\diam _{M_1}(A)$. Consider $q\in Q^k$. Let $x_1,x_2\in f_A^{-1}(q)$. If both $x_i$ are in $A$, then $d_{M_1}(x_1,x_2)\leqslant\diam_{M_1}(A)$. If both $x_i$ are in $M_1\setminus A$, then, using the same estimates as in the first point in the proof of theorem~\ref{nocover}, we get $\diam(f_A^{-1}(q))\leqslant \W(f)+\diam _{M_1}(A)$. Finally, let $x_1\in M_1\setminus A$ and $x_2\in \Int(A)$. Since images of these regions under $f_A$ intersect only by $f_A(\partial A)$, there is $w\in \partial A$ so that $f_A(w)=f_A(x_1)=f_A(x_2)$. Let $y$ be the first point of $\partial A$ on the minimizing geodesic $\gamma$ from $x_1$ to $w$ in $M_1\#_AM_2$ so that $[x_1,y)\subset M_1\setminus A$. Then
$$d_{M_1}(x_1,x_2)\leqslant d_{\gamma}(x_1,y)+d_{M_1}(y,x_2)\leqslant d_{\gamma}(x_1,w)+\diam_{M_1}(A)\leqslant\W(f)+\diam _{M_1}(A).$$
\end{proof}

\begin{proof}[Proof of Theorem \ref{nocover1}']
    We start with near-optimal maps $p_i:M_i\to P^k_i$ and either ensure $p_i|_{\partial A}$ is contractible by applying lemma~\ref{all-contraction} to $M_i\setminus \Int(A)$ or $A$ in cases a) and b), or, in case c) note that $p_i|_A$ is itself a contraction of $p_i|_{\partial A}$. Using lemma~\ref{if_contracts}, we can extend $p_i$ to a collar neighborhood of $\partial A$ so that the other boundary of the collar is a single fiber of $p_i$. A small difference in this case is that we want the collar to be now on the outside of $A$, but that can be achieved by a homeomorphism that pushes it outside, that can be taken to be arbitrarily close to identity on $M_i$. Now we can glue both $p_i$ into a single map 
$$p_\#:M_1\#_AM_2\to P_1\cup P_2/x_1\sim x_2,$$
whose fibers are either $p^{-1}_\#(x_i)=\partial A$ or reside in a single $M_i\setminus A$ and have their width bounded, similarly to the original, by $\W(p_i)+\diam(\partial A)$ (plus an arbitrarily small correction for pushing collars outside of $A$).
\end{proof}

\section{Covers}
\label{cover_section'}
To deal with the case of universal covers, we would like a version of theorem~\ref{nocover1} that handles multiple connected sums all at once:

\begin{lem}\label{multiple}
    Assume the setting of theorem~\ref{nocover1}, but replace a single connected sum with a manifold $N$ that is joined to multiple $N_i$, each through a corresponding $A_i$ (embedded disjointly in $N$, and each in the corresponding $N_i$) to get $N_\#:=N\#_{\bigcup A_i}\bigcup N_i$. If the distance function on $\partial A_i$ is in $N$ at most $C\geqslant 1$ times of what it is in $N_\#$, then under the same assumptions we get
    $$\UW_{k}(N)\leq C\UW_{k}(N_\#)+2\sup_{i}(\diam_{N}(A_i)).$$
\end{lem}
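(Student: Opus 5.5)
We follow the proof of theorem~\ref{nocover1}, but apply the contraction construction to all the $A_i$ at once. Start with a proper map $f:N_\#\to P^k$ whose width $W=\W(f)$ is arbitrarily close to $\UW_k(N_\#)$, and make it simplicial by lemma~\ref{simp}. Lemma~\ref{all-contraction} applies to each $\partial A_i$ separately: in case a) each component of $\partial A_i$ bounds in the relevant piece, and in case b) we use the oriented piece with connected boundary $\partial A_i$. So we may enlarge $P^k$ and assume every $f|_{\partial A_i}$ is null-homotopic.

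Next we apply lemma~\ref{if_contracts} to each $\partial A_i$ with its own fresh copy of the cells used by the null-homotopy $H_i$, and with its own new cone point $p_i$. This extends $f|_{N\setminus\bigcup A_i}$ over a collar of $\partial A_i$ inside $A_i$, and then by the constant $p_i$ on the rest of $A_i$. The result is $f_A:N\to Q^k$ with the following separation property. For $i\neq j$ the images $f_A(A_i)$ and $f_A(A_j)$ are disjoint, since their added cells are distinct copies. Moreover, $f_A(A_i)\cap f_A(N\setminus\bigcup A_j)=f(\partial A_i)$. Properness is preserved, because each compact $A_i$ is sent into a finite set of new cells and the $A_i$ are locally finite.

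The width estimate then splits into cases for $x_1,x_2\in f_A^{-1}(q)$.
\begin{itemize}
\item[(i)] Both points lie in a single $A_i$. Then $d_N(x_1,x_2)\le\diam_N(A_i)$.
\item[(ii)] Both points lie outside $\bigcup A_i$. Take the minimizing geodesic $\gamma$ in $N_\#$ of length $\le W$. It may leave $N$ through several $\partial A_i$. We replace each excursion, which runs from an entry point $y$ to an exit point $y'$ on the same $\partial A_i$, by a path in $N$ of length $d_N(y,y')\le C\,d_{N_\#}(y,y')$. Summing gives $d_N(x_1,x_2)\le C\cdot\mathrm{length}(\gamma)\le CW$.
\item[(iii)] One point $x_1$ lies outside and $x_2\in A_i$. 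By separation there is $w\in\partial A_i$ with $f(w)=f(x_1)$. Applying (ii) to the pair $(x_1,w)$ gives a bound $CW$, and adding $\diam_N(A_i)$ gives $CW+\diam_N(A_i)$.
\item[(iv)] The points lie in different $A_i,A_j$. This cannot happen, by the disjointness of the added cells.
\end{itemize}
The factor $2$ in front of $\sup\diam$ is a safety margin. We want to cover the alternative in (ii) of cutting $\gamma$ at its first and last hits of $\bigcup\partial A_i$, as in theorem~\ref{nocover}, in case the endpoints of an excursion lie on different boundaries. That cannot actually happen, because each $N_i$ meets $N$ only along $\partial A_i$. The same care is needed in (iii), which also needs a short detour across $A_i$.

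The main obstacle is (ii), where we must control a geodesic that makes arbitrarily many excursions into the $N_i$. The naive bound of one $\diam_N(A_i)$ per excursion compounds, which is exactly the problem the lemma is designed to avoid. The distortion constant $C$ is what makes the bound additive: each excursion segment between two points of one $\partial A_i$ is shortened by at most the factor $C$. I would check carefully that $d_{N_\#}(y,y')$ is bounded by the length of the excursion piece, which holds since that piece is a path in $N_\#$. I would also check that the resulting concatenation stays in $N$, which holds because the replacement paths are chosen in $N$.
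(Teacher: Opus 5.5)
Your construction of $f_A$ (separate fresh copies of the null-homotopy cells for each $A_i$, glued to $P^k$) and your cases (i)--(iii) match the paper's proof. However, case (iv) has a genuine gap, and it sits exactly where the factor $2$ is needed.

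Your claim that $f_A(A_i)\cap f_A(A_j)=\varnothing$ for $i\neq j$ is false. The map $f_A$ agrees with $f$ on $\partial A_i$ and $\partial A_j$, and the fresh cells for $A_i$ and $A_j$ are attached to $P^k$ along $f(\partial A_i)$ and $f(\partial A_j)$. Disjointness of the added cells therefore only gives $f_A(A_i)\cap f_A(A_j)\subset f(\partial A_i)\cap f(\partial A_j)$. Nothing prevents a single fiber of $f$ in $N_\#$ from touching two different boundaries when they lie within distance $W$ of each other.

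The problem is not confined to boundary points either. Lemma~\ref{if_contracts} only guarantees $f_A(A_i)\cap P^k=f(\partial A_i)$, and collar points in $\Int(A_i)$ can be sent into $f(\partial A_i)$. The null-homotopy need not leave $f(\partial A_i)$ immediately, and cannot if $f(\partial A_i)$ contains a $k$-simplex. So a fiber over $q\in f(\partial A_i)\cap f(\partial A_j)$ can contain points of $\Int(A_i)$ and of $\Int(A_j)$, and your case analysis does not bound its diameter.

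The repair uses your own case (ii). Pick $w_1\in\partial A_i$ and $w_2\in\partial A_j$ with $f(w_1)=f(w_2)=q$. They lie in one fiber of $f$, so the excursion-replacement argument gives $d_N(w_1,w_2)\le CW$. Hence
\[
d_N(x_1,x_2)\le \diam_N(A_i)+CW+\diam_N(A_j)\le CW+2\sup_i\diam_N(A_i).
\]
This is the paper's case where both $x_1,x_2$ lie in (possibly different) $A_j$'s, and it is the only place the $2$ is used. So the $2$ is not a safety margin, and your explanation of it is mistaken, although you are right that an excursion cannot end on a different boundary. As written, your case analysis would yield the sharper bound $CW+\sup_i\diam_N(A_i)$, which it does not establish. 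Compare remark~\ref{sparse}, where a single $\diam$ is obtained only when the $\partial A_i$ are farther apart than the width.

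A smaller point in (ii): a geodesic may cross $\partial A_i$ infinitely often, since $N_\#$ need not be smooth there. Replacing excursions one by one then needs a limiting argument. It is cleaner to do as the paper does. For each of the finitely many $N_i$ that $\gamma$ meets, cut out the whole segment between the first and last points of $\gamma$ on $\partial A_i$. Take the $N_i$ in the order in which $\gamma$ first enters them, working on the remaining part of $\gamma$ each time, so that the cuts are disjoint.
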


\begin{proof} The argument goes just like in theorem~\ref{nocover1}: given a proper simplicial mapping $f:N_\#\to P^k$ with width $W(f)$ arbitrarily close to $\UW_k(N_\#)$, the combination of lemmas~\ref{all-contraction} and~\ref{if_contracts} allows to extend $f|_{\partial A_i}$ to $A_i$ (first to a collar neighborhood of $\partial A_i$ inside $A_i$, then as a constant map to the rest of $A_i$). To apply those lemmas to a single $A_I$ at a time, we view $\partial A_I$ as a compact boundary of a complete manifold $N_\#\setminus (N_I\setminus A_I)$. We get $f_i:A_i\to Q_i^k\supset P^k$ such that $f_i(A_i)\cap P^k=f(\partial A_i)$. We can glue these extension together: let $Q^k$ be the union of $Q_i^k$ along $P^k$, so that we can define $f_N:N\to Q^k$ by setting $f_N=f_i$ on $A_i$ and $f_N=f$ on $N\setminus\bigcup_i A_i$.

To estimate the size of a fiber over $p\in P^k$, consider $x_1,x_2\in f_N^{-1}(p)$. Assume neither $x_i$ are in any of $A_j$. Let $\gamma_0$ be a minimizing geodesic in $N_\#$ from $x_1$ to $x_2$. If it ventures into $N_1,\cdots N_n$, we want to cut those pieces out. We modify $\gamma$ inductively: let $y_i$ and $z_i$ be the first and the last point of $N_i$ in $\gamma_{i-1}$ and set $\gamma_i$ to be $\gamma_{i-1}\setminus[y_i,z_i]$. Without loss of generality, $[y_i,z_i]$ was the longest such segment in $\gamma_{n-1}$ as compared to similarly defined segments for remaining $N_{i+1},\cdots N_n$, so that for each remaining $N_m$, their points in $\gamma_i$ cannot be on both side of the cut segment $[y_i,z_i]$. That means that no~$[y_i,z_i]$ intersects a previous cut, so all $[y_i,z_i]$ are disjoint in $\gamma_0$. Now we can replace the cut pieces with their substitutes in $N$: let $\beta_i$ be a minimizing geodesic in $N$ from $y_i$ to $z_i$. By gluing in $\beta_i$ into $\gamma_n$ in place of $[y_i,z_i]$ we get a path $\alpha$ in $N$ from $x_1$ to $x_2$. By definition of $C$, $|\beta_i|\leqslant C|[y_i,z_i]|$, therefore
$$d_N(x_1,x_2)\leqslant|\alpha|=|\gamma_n|+\sum_i |\beta_i|\leqslant |\gamma_n|+\sum_i C|[y_i,z_i]|\leqslant C(|\gamma_n|+\sum_i |[y_i,z_i]|)=C|\gamma_0|\leqslant C\W(f).$$
If $x_i$ fall inside some $A_j$, let $w_i\in \partial A_j\cap f_N^{-1}(p)$, let $\gamma_0$ go from $w_1$ to $w_2$ instead, so that we have 
$$d_N(x_1,x_2)\leqslant d(x_1,w_1)+|\gamma_0|+d(w_2,x_2)\leqslant C\W(f)+2\sup_i(\diam A_i).$$
Finally, a fiber over $q\in Q_i^k\setminus P^k$ is bound in size by $\diam A_i$.
\end{proof}

\begin{proof}[Proof of theorem~\ref{cover}]
    Here is the standard observation that reduces the theorem to the lemma~\ref{multiple} we just proved:
    \begin{lem}\label{universals}
        If $A$ and $\partial A$ are simply-connected, then (the total space of) the universal cover $pr_\#:\widetilde{M\#_AM'}\to M\#_AM'$ is a connected sum of (many copies of) universal covers $\tilde{M}_g$ and $\tilde{M}_h'$ (along their copies of $A$). Each copy of $\partial A_i$ separates in $\widetilde{M\#_AM'}$.
    \end{lem}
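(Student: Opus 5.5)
We describe the cover as a tree of pieces. Write $N=M\#_AM'$, with $M^\circ=M\setminus \Int(A)$ and $M'^\circ=M'\setminus\Int(A)$, so that $N=M^\circ\cup_{\partial A}M'^\circ$. Because $\partial A$ is connected and simply connected, van Kampen gives $\pi_1(N)\cong\pi_1(M^\circ)*\pi_1(M'^\circ)$. Because $A$ is also simply connected, gluing $A$ back in along the simply connected $\partial A$ does not change $\pi_1$, so $\pi_1(M^\circ)\cong\pi_1(M)$ and $\pi_1(M'^\circ)\cong\pi_1(M')$. Hence $\pi_1(N)\cong\pi_1(M)*\pi_1(M')$. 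Let $pr_\#:\tilde N\to N$ be the universal cover. Since $\partial A$ is simply connected, every component of $pr_\#^{-1}(\partial A)$ maps homeomorphically onto $\partial A$; these components are the copies $\partial A_i$.

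Next we identify the pieces. Take a component $U$ of $pr_\#^{-1}(M^\circ)$. We claim that $U\to M^\circ$ is the universal cover of $M^\circ$. The inclusion $\pi_1(M^\circ)\to\pi_1(N)$ is injective, because free factors inject into a free product. Hence $\pi_1(U)$, which is the kernel of $\pi_1(M^\circ)\to\pi_1(N)$, is trivial. Each boundary component of $U$ is a copy of $\partial A$. Capping each of them with a copy of $A$ gives a space $\hat U$. By van Kampen again (simply connected pieces glued along connected, simply connected boundaries), $\hat U$ is simply connected, and it covers $M$. Therefore $\hat U\cong\tilde M$, and $U$ is $\tilde M$ with the lifted copies of $\int A$ removed. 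The lifts of $A$ to $\tilde M$ are isometric copies of $A$, since $A$ is simply connected and the covering is a local isometry. The same argument applies to the components of $pr_\#^{-1}(M'^\circ)$. Thus $\tilde N$ is obtained from copies $\tilde M_g$ (indexed by cosets $g\pi_1(M)$) and copies $\tilde M'_h$ by cutting out copies of $A$ and gluing pairwise along the copies $\partial A_i$. In other words, $\tilde N$ is a connected sum in the sense of lemma~\ref{multiple}.

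Finally, separation. Form the graph $T$ whose vertices are the pieces and whose edges are the copies $\partial A_i$. This is the Bass--Serre tree of the free product, so $T$ is a tree. We can also see this directly: a cycle in $T$ would give a loop in $\tilde N$ that crosses some $\partial A_i$ an odd number of times, that is, exactly once. Such a loop has nonzero intersection number mod $2$ with the closed hypersurface $\partial A_i$, which contradicts $H_1(\tilde N;\Z/2)=0$. More concretely, a two-sided closed hypersurface in a simply connected manifold separates: if it did not, there would be a loop crossing it exactly once, and that loop would define a nontrivial homomorphism $\pi_1\to\Z/2$. Two-sidedness holds because $\partial A_i$ is a boundary component of the collar. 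So removing any $\partial A_i$ disconnects $\tilde N$.

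The main obstacle is the injectivity and simple-connectivity bookkeeping, namely showing that each lifted piece is exactly $\tilde M$ with the copies of $A$ removed. This is where the hypotheses that both $A$ and $\partial A$ are simply connected are used. Once the van Kampen/free-product identification is in place, the remaining steps are formal.
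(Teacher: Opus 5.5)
Your proof is correct. The capping step and the separation argument (a non-separating two-sided $\partial A_i$ would give a loop crossing it exactly once) match the paper's. The real difference is in how you show that a component $N_0$ of $pr_\#^{-1}(M\setminus\Int A)$ is simply connected.

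The paper proves separation first. It then writes $\widetilde{M\#_AM'}$ as $N_0$ glued along the simply connected copies $\partial A_i$ to the ``other side'' pieces $N_i\setminus\Int A_i$, and applies van Kampen in the cover. The trivial group $\pi_1(\widetilde{M\#_AM'})$ is then a free product with $\pi_1(N_0)$ as a factor, so $\pi_1(N_0)=0$.

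You work downstairs instead. Van Kampen in $M\#_AM'$, with only two pieces, gives $\pi_1(M)*\pi_1(M')$. Free factors inject, and the covering-space identity $\pi_1(N_0)\cong\ker\bigl(\pi_1(M\setminus\Int A)\to\pi_1(M\#_AM')\bigr)$ finishes the argument.

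Your route makes the identification of the pieces independent of separation and gives more structure: the pieces are indexed by cosets, and the dual graph is the Bass--Serre tree. That tree makes it transparent that the ``other side'' pieces $N_i$ used in lemma~\ref{multiple} are well defined and disjoint. The paper's route is shorter and stays entirely in the cover, but it depends on separation being established beforehand.

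One small point in your version: van Kampen downstairs needs $M\setminus\Int A$ and $M'\setminus\Int A$ to be connected. This follows from $M$ being connected and $\partial A$ being connected.
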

    Given lemma~\ref{universals}, let $N\supset A_i$ be one copy of $\tilde{M}$ that participates in $\widetilde{M\#_AM'}$ as a connected summand, that is connected via copies $A_i$ of $A$ to complete manifolds $N_i$, each encompassing everything on the other side of $\partial A_i$, so that $\widetilde{M\#_AM'}=N\#_{A_i}\bigcup N_i$. Then lemma~\ref{multiple} applies, since universal covers are orientable, so we conclude
    $$\UW_k(\tilde{M})=\UW_k(N)\leqslant
    C\UW_k(N\#_{A_i}\bigcup N_i)+2\sup_i\diam_N(A_i)=
    C\UW_k(\widetilde{M\#_AM'})+2\diam_{\tilde{M}}(A).$$
\end{proof}

\begin{rem}\label{sparse}
    Note that if $\UW_k(N_\#)$ is less than the distance between separate $\partial A_i$ in $N_\#$, then the minimizing geodesic $\gamma_0$ connecting two points of a fiber in $N_\#$ cannot visit two different $\partial A_i$, so when considering a given fiber, we may ignore all the $N_i$ except for the one that is potentially touched by $\gamma_0$ (say, $N'$), and argue as in theorem~\ref{cover}, but estimate the size of the fiber as in theorem~\ref{nocover1} in the context of the connected sum of $N'$ and the rest of $\tilde{M}$ along $\partial A_1$, and get
    $$\UW_k(\tilde{M})\leqslant \UW_k(\widetilde{M\#_AM'})+\diam_{\tilde{M}}(A).$$
\end{rem}

\begin{proof}[Proof of lemma~\ref{universals}]
    The cover $pr_\#$ is trivial over $\partial A$ since it is simply connected, so the boundary of the manifold $pr_\#^{-1}(M\setminus \Int A)$ consists of copies of $\partial A$. Note that each $\partial A_i$  separates in $\widetilde{M\#_A M'}$ since otherwise it would have a nontrivial loop that crosses $\partial A_i$ only in one direction. Let's call $N_0$ one connected component of $pr_\#^{-1}(M\setminus \Int A)$, and let $N$ be $N_0$ with every boundary component $\partial A_i$ filled in by $A_i\cong A$, so that the cover $pr_\#|_{N_0}:N_0\to M\setminus \Int A$ can be extended to $pr:N\to M$. To show that it is a universal cover, we need to show $\pi_1(N)\cong 0$.
    
    Recall that each $\partial A_i$ separates, so to each $\partial A_i$ we can assign ``the other side piece'' $N_i$ so that $\widetilde{M\#_A M'}\cong N\#_{A_i}\bigcup N_i$. Then by Van Kampen's theorem $\pi_1(\widetilde{M\#_AM'})$ is a free product over $\pi_1(\partial A_i)\cong 0$ of $\pi_1(N_0)$ and $\pi_1(N_i\setminus A_i)$. Since the result is 0, then $\pi_1(N_0)\cong 0$. Again, $\pi_1(N)$ is a free product over $\pi_1(\partial A_i)\cong 0$ of $\pi_1(N_0)$ and $\pi_1(A_i)\cong 0$, so $\pi_1(N)\cong 0$. Therefore, $N$ is isomorphic to the universal cover $\tilde{M}$.
\end{proof}

\begin{proof}[Proof of Theorem \ref{cover}']
The proof is mostly identical to the proof of theorem~\ref{cover}, with similar modifications of going from theorem~\ref{nocover1} to theorem~\ref{nocover1}'. We consider almost optimal projections $p_i:\widetilde{M_i}\to P^k_i$. They can be upgraded to be contractible on each copy $\partial A_i$ by lemma~\ref{all-contraction} applied to (orientable, as a universal cover) $\widetilde{M_i}\setminus A_i$ in cases a) and b), or $p_i|_{\partial A_i}$ is already contractible by $p_i|_{A_i}$ if $A_i$ is a ball. Using lemma~\ref{if_contracts}, $p_i$ can be further changed so that each copy of $\partial A$ is its own separate fiber, like in theorem~\ref{nocover1}', with the same caveats. These maps $p'_i$ can be glued along $\partial A_{i,j}$ in the domain and $x_{i,j}$ in the target to get $p:\widetilde{M_1\#M_2}\to P^k$. The estimates of the fiber sizes, again, are identical to those in theorem~\ref{cover} (since every $\partial A_{i,j}$ is a separate fiber, any other fiber is confined to a single $N$ piece like in theorem~\ref{cover}), giving
$$\UW_k(\widetilde{M_1\#_A M_2})\leqslant C'\max_{i}(\UW_k(\widetilde{M_i}))+2\diam_{M_1\# M_2}(\partial A).$$
\end{proof}

\section{Counterexamples}
\label{counter_section}
    Here we show examples that show that some of the constants in our theorems cannot be improved --- at least in general enough settings. 

\subsection{Constant 2 in theorem~\ref{nocover}}    
    First we tackle theorem~\ref{nocover} and its estimate 
    $$\UW_{k}(M_i)\leq {\bf 2}\UW_{k}(M_1 \#_A M_2)+\diam_{M_i}(A).$$
    We want to show the necessity of the constant {\bf 2} in this formula. Consider, for example, the following construction when $k=1$. Let $C$ be a cone of radius 1 over a circle of length $4\pi$ and $r:C\to [0,1]$ be the distance function to the vertex. Let's use notation like $C_{r\leqslant a}$ as a shorthand for $r^{-1}([0,a])\subset C$. Define $$A=C_{r\leqslant 1-\e} \text{,\hspace{.5cm}} M_1=C\text{,\hspace{.5cm}and\hspace{.5cm}} M_2=C_{r\leqslant 1-\e+\delta}$$
    Then $M_1\#_A M_2$ is (almost) a strip of thickness $\e+\delta$, and $\UW_1(M_1\#_A M_2)\leqslant \e+\delta$. However, we will check that
    \begin{lem}\label{conew}
        Urysohn 1-width of a unit cone $C$ over a circle of length $4\pi$ is $\UW_1(C)=2$. 
    \end{lem}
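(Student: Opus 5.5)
The plan is to prove the two inequalities $\UW_1(C)\leqslant 2$ and $\UW_1(C)\geqslant 2$ separately. Throughout I use polar coordinates $(r,\theta)$ on $C$ with $\theta\in\R/4\pi\Z$. The key metric fact is that $d\bigl((r,\theta),(s,\varphi)\bigr)=r+s$ whenever the angular separation $|\theta-\varphi|$ (measured on the circle of length $4\pi$) is at least $\pi$: the cone is flat with total angle $4\pi$, so such points are joined by a shortest path through the vertex. When the separation is less than $\pi$, the distance is given by the planar law of cosines.

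\textbf{Upper bound.} Take $f=r:C\to[0,1]$, which is proper. Its fiber $r^{-1}(t)$ is the circle of radius $t$. Any two of its points are at distance at most $2t$: through the vertex the distance is exactly $2t$, and otherwise the law of cosines gives at most $2t$. Hence $\W(r)=2$ and $\UW_1(C)\leqslant 2$.

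\textbf{Lower bound, reduction.} Suppose $f:C\to P^1$ is proper with $\W(f)<2$; I want a contradiction. By lemma~\ref{simp} we may take $f$ simplicial. Since $C$ is contractible, $f$ lifts to the universal cover of the relevant component of $P^1$, which is a tree $T$. Lifting only splits fibers, so the width does not increase. Restrict to the boundary circle $S=C_{r=1}$, which has length $4\pi$. Points of $S$ at angular separation at least $\pi$ are at distance $2$ in $C$. So it suffices to prove the following claim, whose threshold $4\pi/3$ exceeds $\pi$: \emph{for any continuous map $g:S\to T$ from a circle of length $L$ to a finite tree, some fiber contains two points at arc distance at least $L/3$.}

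\textbf{Lower bound, the tree claim.} Suppose every fiber has all pairwise arc distances $<L/3$.
\begin{enumerate}
\item Because $L/3$ is well below $L/2$, each fiber $g^{-1}(t)$ then lies in a closed arc $I_t$ of length $<L/3$. This is a Helly-type step on the circle: a set with all pairwise distances below $L/3$ cannot wrap around.
\item The complement $S\setminus I_t$ is a connected arc of length $>2L/3$ avoiding $g^{-1}(t)$. It therefore maps into a single component $B_t$ of $T\setminus\{t\}$, which assigns to each $t\in g(S)$ a direction in the tree.
\item Follow these directions. Since $T$ is finite, there must be two points $t,t'$ that point toward each other, with $B_t\ni t'$ and $B_{t'}\ni t$ and their "far sides" disjoint.
\item Then $S\setminus I_t$ and $S\setminus I_{t'}$ are disjoint arcs of total length $>4L/3>L$, a contradiction.
\end{enumerate}

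\textbf{Main obstacle.} The step I expect to be delicate is making step 3 rigorous, since $t$ ranges over a continuum. The direction field $t\mapsto B_t$ is locally constant in a suitable sense, because $g^{-1}$ of a small neighborhood stays near $I_t$. I would first verify this local constancy, then run a discrete "sink" argument on the vertices of a fine subdivision of $T$, handling points interior to edges by continuity. The alternative is to pick the point $t$ minimizing the maximal length of a component of $S\setminus g^{-1}(t)$ mapped into a single branch.

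Given the claim, applying it to $g=f|_S$ with $L=4\pi$ yields a fiber of $f$ containing two points at distance $2$, contradicting $\W(f)<2$. Hence $\UW_1(C)\geqslant 2$, which together with the upper bound gives $\UW_1(C)=2$.
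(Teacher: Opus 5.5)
Everything through step~2 is fine: the upper bound via $r$, the lift to a tree, the reduction to the boundary circle, and the fact that each fiber lies in a closed arc $I_t$ of length $<L/3$ whose complement maps into a single branch $B_t$. This is also a genuinely different route from the paper, which builds a bundle of open semicircles over $P^1$ and extracts a retraction $C\to\partial C$.

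\textbf{The gap is in step~4.} From $t'\in B_t$ and $t\in B_{t'}$ you only get $g(S\setminus I_t)\subset B_t$ and $g(S\setminus I_{t'})\subset B_{t'}$. But $B_t\cap B_{t'}$ is never empty: it contains the open segment between $t$ and $t'$, which lies in the subtree $g(S)$. So nothing forces the two arcs apart. In fact they always intersect, since their lengths add up to more than $L$, and their common part simply maps into $B_t\cap B_{t'}$. Also, a pair pointing toward each other always exists for trivial reasons (any two leaves of $g(S)$ do). So step~3 is not the delicate point; the contradiction has to come from how the pair is chosen.

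\textbf{What is missing} is control over the preimage of the region between $t$ and $t'$. Here is one fix along your lines.
\begin{itemize}
\item Subdivide $g(S)$, and $S$ accordingly so that $g$ stays simplicial, until every edge of $S$ mapped onto an edge has length $<L/6$.
\item Each vertex $t$ selects the unique incident edge leading into $B_t$. With $V$ vertices and $V-1$ edges, some edge $[t,t']$ is selected by both endpoints.
\item For that edge, $B_t\cap B_{t'}\cap g(S)$ is just the open edge $(t,t')$. Hence $S\setminus(I_t\cup I_{t'})$, which has length $>L/3$ and at most two components, lies in $g^{-1}((t,t'))$.
\item The components of $g^{-1}((t,t'))$ are single open edges of $S$ of length $<L/6$. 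So $S\setminus(I_t\cup I_{t'})$ has length $<L/3$, a contradiction.
\end{itemize}

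Alternatively, you can avoid directions entirely. Take a centroid $t$ of the pushforward of arc length, so that every component of $g(S)\setminus\{t\}$ carries measure at most $L/2$. Then every component of $S\setminus g^{-1}(t)$ has length at most $L/2$. Hence $g^{-1}(t)$ is nonempty and cannot lie in an arc of length $<L/3$. By your step~1, it therefore contains two points at arc distance at least $L/3$.
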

    Thus we have
    $$\UW_1(M_1)=2=2(1-\e)+2(\e+\delta)-2\delta=\diam(A)+{\bf 2}\UW(M_1\#_A M_2)-2\delta,$$
    which shows the necessity of the factor 2 by letting $\delta\to 0$. In this example the spaces are non-smooth manifolds with boundary, but they can be replaced by smooth closed $(n\geqslant3)$-manifolds $M_i^n$ that are the boundaries of regular $\delta'$-neighborhoods of the original $M_i^2$ (to which $M_i^2$ retracts by a $\delta'$-small displacement, so its $\UW_k$ is at least $\UW_k(C)-2\delta'$). However, that would make $A^3$ no longer be a disk. In fact, if $A$ is a disk of dimension $n\geqslant 3$, theorem~\ref{nocover1} applies and the factor 2 is not necessary.

    \begin{proof}[Proof of lemma~\ref{conew}]
        Assume $\UW_1(C)<2$ so that there is a map $f:C\to P^1$ of width $W<2$, that we assume to be simplicial by lemma~\ref{simp}. The plan is to use a fiber contraction type argument to retract (some  of) $C$ to $\partial C$ in a way that leads to a contradiction in homology.
        
        Denote $f_\partial:=f|_{\partial C}$. Consider a point $x\in \partial C$ of a non-empty fiber $f_\partial^{-1}(p)$; assign that $x$ to be the south pole of $\partial C$. Then every point of the northern semicircle has distance $2$ to $x$ and therefore doesn't intersect $f_\partial^{-1}(p)$. Therefore the preimage $f_{\partial}^{-1}(p)$ is contained in an open southern semicircle that we denote $S(p)$. Assume that these fiber-containing semicircles $S(p)$ can be taken to depend continuously on $p$, and the whole $P^1$ is in the image $f(\partial C)$, so that we have a fibration $F:\mathcal{S}\to P^1$ where $\mathcal{S}=\{(x,p)\subset\partial C\times P\text{ such that } x\in S(p)\}$. Then the pullback of $F$ to $C$ has contractible fibers $S(p)$, so it admits a section $s(x)$ whose $\partial C$-coordinate behaves as identity map on $\partial C$. Such section provides a retraction of $C$ onto $\partial C$, which is a contradiction.

        We assumed that all of $P^1$ is in the image of $f$. If that is not the case, we only construct the lifting over $X=f^{-1}(f(\partial C))$. The subset $X$ forms a chain in $C$ between the ``outer boundary'' $\partial C$ and the ``inner boundary'' $\partial (C\setminus X)$, but the section $s$ is trivial over the inner boundary when it comes to homology. Indeed, $f(\partial (C\setminus X))$ is in the subcomplex $f(\partial C)$ and (the closure of) its complement, meaning, it doesn't go inside the top cells of $P$. So the section $s(\partial (C\setminus X))$ can be taken to factor through $P^{(0)}$, and will vanish in $H_1(\partial C)$. Thus, the section $s(X)$ is still a null-homology of $[\partial C]\in H_1(\partial C)$ and a contradiction.

        We also assumed that the semicircles $S(p)$ can be chosen continuously in $p$. To do so, first choose a neighborhood $U(p)$ of any point $p\in f(\partial C)$ so that a constant $S(p)$ would work over~$U(p)$ (that is, contain the fibers $f_\partial^{-1}(q)$ for $q\in U(p)$). It is possible since the opposite (closed) semicircle $N(p)=\partial C\setminus S(p)$ is compact, so $f(N(p))$ is a closed set that misses $p$; thus, we can set $U(p):=\partial C\setminus N(p)$. Now, by using a partition of unity subordinate to this cover, interpolate between different semicircles $S(p)$ to make them depend continuously on $p$.  
    \end{proof}

    We provided a counterexample for relaxing the coefficient in theorem~\ref{nocover} for $\UW_k$ when $k=1$. If $k>2$, the exact same reasoning applies (including the proof of lemma~\ref{conew}), only instead of a double-sized circle, $\partial C$ should be a double-sized sphere of dimension $k$.

\subsection{Constant $C$ theorem~\ref{cover}, case $k=1$}
    Let us rewrite the estimate of theorem~\ref{cover} in the following way:
    \begin{equation}\label{c1}
        \UW_{1}(\widetilde{M_i})-2\diam_{\tilde{M_i}}(A)\leq {\bf C}\UW_{1}(\widetilde{M_1 \#_A M_2}).
    \end{equation}
    We want to show that the constant $\bf C$ here cannot be made uniform in the current setting (at least for more general metric spaces or for manifolds of dimension $n\geqslant 4$). The idea is to start with $M_1$ being (a 2-skeleton of) a $K(\Pi,1)$ for certain finite group $\Pi$ that would make $\UW_1(M_1)$ large, but then force $\widetilde{M_1\#_AM_2}$ to have small diameter (and hence $\UW_1$) by replacing every fundamental domain $A_i$ with a space of tiny diameter. Now we describe the details.

    Let $\Pi_n=(\Z/2\Z)^{3n}$ and $K(\Pi_n,1)$ be the product of $3n$ projective spaces $\R P^\infty$ with the standard metric and cell structure; thinking of $S^1$ as $U(1)$, we will treat the 1-cell of $\R P^\infty$ as $U(1)/\{\pm 1\}$. The 2-skeleton $K(\Pi_n,1)^{(2)}$ will be (a preliminary) $M_1$ and we pick $A$ to be the most of $M_1$, specifically, the product of balls of radius $(\pi/2-\e)$ around the base point (or rather, its intersection with the 2-skeleton). Let $M_2$ be a skewed doubling of $A$, $M_2=A\sqcup_{\partial A}A'$, where $A'$ has the same metric as $A$ near the boundary, but much smaller metric away from it, so that the whole $A'$ is within $\e$ distance of its center. This is all set up so that taking the connected sum $M_1\#_AM_2$ effectively shrinks most of $M_1$ from being $A$ down to $A'$. The same happens in the universal cover in each fundamental domain, forcing 
    \begin{equation}\label{diam}
        \diam(\widetilde{M_1\#_AM_2})\leqslant (12n+2\sqrt{2}+2)\e,
    \end{equation}
    since for an arbitrary point in $\widetilde{M_1\#_AM_2}$ it takes at most $\sqrt{2}\e$ to reach the nearest copy of $A'$, then $\e$ to get to its center, and $4\e$ to move from one copy $A'_g$ to a different one $A'_h$ if $g,h\in (\Z/2\Z)^{3n}$ differ by a single coordinate. 

    On the other hand, we can estimate $\UW_1(\tilde{M_1})$:
    \begin{lem}\label{rp2w}
        For the $M_1$ constructed above ($M_1:=K((\Z/2\Z)^{3n},1)^{(2)}$) we have 
        $$\UW_1(\tilde{M_1})\geqslant \frac{n-2\sqrt{2}}{8}\pi$$
    \end{lem}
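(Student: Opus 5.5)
The plan is to find inside $\tilde{M_1}$ a large flat $2$-dimensional piece on which the ambient distance is not much shorter than the intrinsic one. Then we apply a Lebesgue-type lower bound for the $1$-width of a square. Finally we pass from the subspace to the whole space, using that the $1$-width of a subset (with the restricted ambient metric) is at most that of the whole space.

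First we describe $\tilde{M_1}$. The universal cover of $(\R P^\infty)^{3n}$ is $(S^\infty)^{3n}$, and $\tilde{M_1}$ is the $2$-skeleton of its lifted cell structure. In particular, each lifted $1$-cell circle has length $2\pi$, built from two arcs of length $\pi$. Since $2$-cells of a product include products of $1$-cells from distinct factors, $\tilde{M_1}$ contains the $2$-skeleton of the torus $(S^1)^{3n}$. It therefore contains every product $\gamma_1\times\gamma_2$ of cellular loops $\gamma_1,\gamma_2$ that use disjoint sets of coordinates.

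Split the $3n$ coordinates into three groups of $n$. In a group, let $\gamma$ be the loop that goes around the first half-circle (length $\pi$) in each of its $n$ coordinates in turn, and then the second halves in turn. This is a closed cellular loop of length $2n\pi$. Consider the torus $T=\gamma_1\times\gamma_2$ built from the first two groups; it is a flat $2n\pi\times 2n\pi$ torus made of $2$-cells of $\tilde{M_1}$. The third group is meant as spare room to control the metric comparison below; if it turns out to be unnecessary, the construction still works with two groups.

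The key step, and the one I expect to be the main obstacle, is the metric comparison. We need to show that two points of $T$ at intrinsic distance $L$ along a $\gamma_i$-direction are at ambient distance at least roughly $L/4-c$, with $c=O(\sqrt2\pi)$. The difficulty is that hemispherical $2$-cells and other coordinates can give shortcuts.

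To get this bound I would use the coordinate projections $\tilde{M_1}\to S^\infty$ and, further, the per-coordinate "which half-circle" data. Each $S^\infty$ factor retracts onto its $1$-skeleton circle by a map that moves points by a bounded amount. The $2$-cells are hemispheres of radius $\pi/2$, so their diagonals cost a factor of about $\sqrt2$; this is where the $2\sqrt2$ comes from. Moving between antipodal halves in $m$ of the coordinates of a group forces an ambient path of length at least about $m\pi/2$, up to the $\sqrt2$ loss from diagonal crossings in $2$-cells. This gives a Lipschitz-type lower bound of the form $d_{\tilde{M_1}}\ge \tfrac12 d_T - O(\pi)$ on the relevant pairs.

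Finally, apply the square lower bound to a fundamental square of $T$. Any map $f:T\to P^1$ of width $W$ must have a fiber joining opposite sides of the square, by the standard Lebesgue/Hex-type argument: with a simplicial $f$ from Lemma~\ref{simp}, either some fiber meets both vertical sides or some fiber meets both horizontal sides. Such a fiber has intrinsic diameter at least of order $n\pi/2$. Tracking the factor $1/2$ from the half-loop geometry and the halving in the metric comparison gives an ambient diameter of at least $\frac{n\pi}{8}-\frac{2\sqrt2\pi}{8}$. Applying this to the restriction of a near-optimal $f:\tilde{M_1}\to P^1$ to $T$ yields the bound stated in Lemma~\ref{rp2w}.
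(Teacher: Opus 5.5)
\textbf{Gaps in the proposal.} Your strategy is genuinely different from the paper's, and it can be made to work: find a large embedded square in $\tilde{M_1}$, bound its ambient side-to-side distance from below, and apply Lebesgue's covering lemma to the restriction of a near-optimal map. (Passing to a subset with the restricted metric is fine.) As written, however, three steps do not go through.

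First, the metric comparison, which you rightly call the crux, is not proved, and the mechanism you propose does not exist. The image of $\tilde{M_1}$ in a single factor is the $2$-skeleton $S^2$ of $S^\infty$. Neither $S^2$ nor $S^\infty$ retracts onto the circle $U(1)$, since a retract of a simply connected space is simply connected. An additive ``bounded displacement'' would not give a multiplicative length bound anyway. The tool that works is the one the paper uses in Lemma~\ref{geodesic}. Let $\chi_i$ be the distance to $1$ in the $i$-th factor. Then any signed sum $\sum_{i\in G}\pm\chi_i$ is $\sqrt2$-Lipschitz on $\tilde{M_1}$: on a product cell $e^1_i\times e^1_j$ it has gradient $(\pm1,\pm1)$, and on a hemispherical $2$-cell of one factor it is $1$-Lipschitz.

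Second, Lebesgue's lemma cannot be applied to a fundamental square of $T$. Its opposite sides are identified in $T$, so a fiber meeting both of them can be arbitrarily small.

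Third, the numbers in your last paragraph are not derived from anything and look fitted to the target. This applies both to the intrinsic diameter $n\pi/2$ and to the extra factor $1/2$ from ``half-loop geometry''.

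\textbf{Repair, and comparison with the paper.} The fix is short and makes the torus unnecessary. Take disjoint groups $G_1,G_2$ of $n$ coordinates. Let $\sigma_j$ be the staircase path in $G_j$ from $(1,\dots,1)$ to $(-1,\dots,-1)$ along first half-circles. Then $Q=\sigma_1\times\sigma_2$ is an embedded square of side $n\pi$ made of product $2$-cells. The function $\sum_{i\in G_1}\chi_i$ equals $0$ on the side $\{s_1=0\}$ and $n\pi$ on the side $\{s_1=n\pi\}$. Hence these sides are at ambient distance at least $n\pi/\sqrt2$, and the same holds for the other pair of sides via $G_2$.

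Now suppose $f:\tilde{M_1}\to P^1$ were proper with $\W(f)<n\pi/\sqrt2$. Pull back the open vertex stars of a fine subdivision, as in Lemma~\ref{simp}. This gives an open cover of $Q$ of multiplicity at most $2$ in which no member meets two opposite sides, contradicting Lebesgue's lemma. So $\UW_1(\tilde{M_1})\geqslant n\pi/\sqrt2$. This is stronger than the stated bound and uses only $2n$ factors.

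The paper instead applies the fiber-contraction Lemma~\ref{fibercontract} to a null-homotopic closed geodesic $\gamma$ through all $3n$ factors. It maps $\gamma$, via the distances to $\gamma(0)$ and $\gamma(n)$, around a large triangle in $\R^2$. That argument needs exact distances along $\gamma$ (Lemma~\ref{geodesic}) and the simple connectivity of $\tilde{M_1}$. It also loses constants through the $2\rho$-neighbourhood and the $\sqrt2$ rescaling. The repaired square argument is more elementary and sharper here because $\tilde{M_1}$ literally contains large product squares. The paper's method is the general tool, and it does not need such squares.
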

    Together with $\diam(A)=\sqrt{2}(\pi-2\e)$ this shows that $\UW_1(\tilde{M_1})-2\diam(A)>1$ for large enough~$n$. And since by~(\ref{diam}) taking $\e\to 0$ forces $\diam(\widetilde{M_1\#_AM_2})\to 0$, the constant $\bf C$ in~(\ref{c1}) has to go to infinity and cannot be uniform.

    This example uses a 2-complex with piecewise Riemannian metric. One can turn it into a manifold by taking the boundary of its tiny normal neighborhood (the dimension needs to be at least 2 higher than the original in order to keep the fundamental group unchanged). The set $A$ can be kept a Riemannian ball (any compact manifold with boundary can be ``filled in'' by an embedded Riemannian ball that is $\e$-dense).

    \begin{proof}[Proof of lemma~\ref{rp2w}]
        The method for this proof is based on Gromov's fiber contraction argument and is one of the standard tools for estimating $\UW_k$ from below. We map our space $X$ onto a plane so that one of the contractible loops of $X$ encircles a big disk in the plane. If $X$ were almost 1-dimensional (had small $\UW_1$), then we could slightly homotope the mapping to factor through its 1-dimensional approximation, but that is impossible for a map stretched over a big disk for homological reasons. The details of this argument can be followed from Corollary~2.3 of~\cite{bb2021local} and Lemma~5.2 of~\cite{guth2005lipshitz}. We copy here Corollary~2.3 without proof and state it in a less general form for the sake of simplicity:
        \begin{lem}\label{fibercontract}
            Let $f:X\to \R^n$ be a 1-Lipschitz map from a metric space $X$, such that for some closed subset $X_0\subset X$ the induced map $f_* :H_n(X,X_0)\to H_n(\R^n,U_{2\rho}(f(X_0)))$ is non-trivial (here $U_r(S)$ denotes the neighborhood of $S$ of radius $r$). Then $\UW_{n-1}(X)\geqslant \rho$.
        \end{lem}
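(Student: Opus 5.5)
The plan is to argue by contradiction with the standard nerve-map construction. Suppose $g:X\to P^{n-1}$ is a proper map with $\W(g)=W<\rho$. Following lemma~\ref{simp}, subdivide $P$ so that the preimage $g^{-1}(S_v)$ of the open star of every vertex $v$ has diameter $<\rho$. This is possible because $W<\rho$ leaves room, and only the finitely many stars meeting the compact support of a fixed relative cycle matter. For each vertex $v$ with $g^{-1}(S_v)\neq\varnothing$ choose a point $x_v\in g^{-1}(S_v)$. Define $h:P\to\R^n$ by $h(v)=f(x_v)$ and extend $h$ affinely over simplices.

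Next, compare $f$ with $h\circ g$. If $g(x)$ lies in the open simplex spanned by $v_0,\dots,v_j$, then $x\in g^{-1}(S_{v_i})$ for every $i$. Hence $d(x,x_{v_i})<\rho$, and since $f$ is 1-Lipschitz, $|f(x)-f(x_{v_i})|<\rho$. By convexity of balls in $\R^n$ this gives $|f(x)-h(g(x))|<\rho$. The straight-line homotopy from $f$ to $h\circ g$ therefore moves every point by less than $\rho$. In particular it carries $X_0$ into $U_\rho(f(X_0))\subset U_{2\rho}(f(X_0))$. So $f_*=(h\circ g)_*$ as maps $H_n(X,X_0)\to H_n(\R^n,U_{2\rho}(f(X_0)))$.

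To make $h\circ g$ a map of pairs, let $P_0\subset P$ be the subcomplex of all closed simplices that meet $g(X_0)$. The same estimate shows $h(P_0)\subset U_{2\rho}(f(X_0))$: a point $z$ of such a simplex $\sigma$ satisfies $|h(z)-f(x)|<\rho$ for some $x\in X_0$ whose image lies in $\sigma$. This is where the factor $2$ comes in. Hence $(h\circ g)_*$ factors as
$$H_n(X,X_0)\to H_n(P,P_0)\stackrel{h_*}{\to}H_n(\R^n,U_{2\rho}(f(X_0))).$$

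The main obstacle is showing that this last composite vanishes, which gives the contradiction. Take a class $c\in H_n(X,X_0)$ with $f_*c\neq0$; it is supported on a compact piece, so only a finite subcomplex of $P$ is involved. A nonzero class in $H_n(\R^n,U)$ with $U$ open is detected by local degrees. Under $H_n(\R^n,U)\cong\tilde H_{n-1}(U)$, a nonzero $(n-1)$-cycle in $U$ has nonzero winding number around some point $y\notin U$, and winding numbers are locally constant off $U$. So there is a nonempty open set of points $y\notin U$ for which the image of $f_*c$ in $H_n(\R^n,\R^n\setminus\{y\})$ is nonzero. On the other hand, $h$ is piecewise linear from an $(n-1)$-dimensional finite complex, so its image has measure zero, and we can pick such a $y$ with $y\notin h(P)$. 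Then the composite $H_n(P,P_0)\to H_n(\R^n,\R^n\setminus\{y\})$ factors through $H_n(\R^n\setminus\{y\},\R^n\setminus\{y\})=0$. This contradicts $f_*c\neq0$, so $\UW_{n-1}(X)\geqslant\rho$.
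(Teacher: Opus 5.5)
Your strategy matches the idea the paper has in mind. The paper gives no proof of this lemma: it quotes Corollary~2.3 of \cite{bb2021local} and only sketches ``homotope $f$ to factor through an $(n-1)$-dimensional approximation, then use homology.'' Your first two steps are fine. Once every $g^{-1}(S_v)$ has diameter $<\rho$, the nerve map satisfies $|h(z)-f(x)|<\rho$ for all $z\in\overline{\tau}$, where $\tau$ is the \emph{open} simplex containing $g(x)$. So the straight-line homotopy is a homotopy of pairs.

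\textbf{First gap.} The claim $h(P_0)\subset U_{2\rho}(f(X_0))$ fails for your $P_0$ (all closed simplices meeting $g(X_0)$). Suppose $\sigma$ meets $g(X_0)$ only in a proper face $\tau\ni g(x)$, and let $v$ be a vertex of $\sigma$ not in $\tau$. Then $g(x)\notin S_v$, and nothing relates $x_v$ to $x$. If $g$ misses the open simplex $\sigma$ and reaches $S_v$ only by a long detour in $P$, then $x_v$ can be arbitrarily far from $x$. Worse, $g^{-1}(S_v)$ may be empty, so $h(v)$ is not even defined. In either case $h(\sigma)$ need not stay near $f(X_0)$. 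Your estimate only controls $h$ on the closed \emph{carrier} simplex of $g(x)$. The fix is to let $P_0$ be the union of the closed simplices $\overline{\tau}$ over all open simplices $\tau$ with $\tau\cap g(X_0)\neq\varnothing$. This is a subcomplex containing $g(X_0)$, and your own estimate then gives $h(P_0)\subset U_\rho(f(X_0))$.

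\textbf{Second gap.} With that correction you are done at once. $P$ is $(n-1)$-dimensional and $P_0$ is a subcomplex, so $H_n(P,P_0)=0$ (there are no $n$-simplices), and hence $f_*=h_*\circ g_*=0$. The local-degree argument you use instead contains a false step. The winding number of the boundary cycle is locally constant off the \emph{support} of that cycle, not ``off $U$'', and $\R^n\setminus U$ can have empty interior.

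For example, take $X=[-2\rho,2\rho]\times[-L,L]^{n-1}$ with $L>2\rho$, $X_0=\partial X$, and $f$ the inclusion. The hypothesis holds. Yet the points $y\notin U=U_{2\rho}(f(X_0))$ of nonzero local degree form the flat piece $\{0\}\times[2\rho-L,L-2\rho]^{n-1}$. That piece has measure zero, so knowing $h(P)$ has measure zero does not produce such a $y$ outside $h(P)$. Moreover, for $y\in U$ the map $H_n(\R^n,U)\to H_n(\R^n,\R^n\setminus\{y\})$ is not defined.

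That route can be repaired only by using the slack between $U_\rho$ and $U_{2\rho}$:
\begin{itemize}
\item work in $H_n(\R^n,U_\rho(f(X_0)))$, where both the homotopy and the boundary cycle live;
\item for $y\notin U_{2\rho}(f(X_0))$ the ball $B(y,\rho)$ misses $U_\rho(f(X_0))$, so the degree is constant on that ball;
\item pick $y'\in B(y,\rho)\setminus h(P)$.
\end{itemize}
The dimension count is simpler. It also shows that nontriviality into $H_n(\R^n,U_\rho(f(X_0)))$ already suffices, so the factor~$2$ in the statement is not needed for this argument.
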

        To apply this lemma, we want some null-homologous curve $X_0$ to be stretched by some 1-Lipschitz map $f$ in such a way that $f(X_0)$ is not null-homologous in its $2\rho$-wide neighborhood. We set $X_0$ to be the following geodesic $\gamma(t)\subset \tilde{M_1}$ for $t\in \R/(3n\Z)$. We set
        $$\gamma(t):=(e^{i\pi x_1(t)},\cdots,e^{i\pi x_k(t)},\cdots)\subset \Bigl(\prod_{k=1}^{3n}U(1)\Bigr)^{(2)}\subset \tilde{M_1}$$
        where for integer $t$'s all $x_k(t)$ are 0 except for $k\in [t,t+n)$, for which $x_k=1$ (the indexes $k$ are also thought as belonging to $\Z/(3n\Z)$, meaning $x_{k+3n}\equiv x_k$). For non-integer $t$'s we define $x_k(t)$ via linear interpolation between nearest integer points.
        
        Now define $f:\tilde{M_1}\to\R^2$ as $f:x\mapsto \bigl(d(x,\gamma(0)),d(x,\gamma(n))\bigr)$. 
        \begin{lem}\label{geodesic}
            For $t,\tau\in \Z$ the distance $d(\gamma(t),\gamma(\tau))$ is equal to $\sqrt{2}\pi/2$ times the number of different coordinates in $\gamma(t)$ and $\gamma(\tau)$.
        \end{lem}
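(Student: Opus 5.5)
The plan is to prove matching upper and lower bounds for $d(\gamma(t),\gamma(\tau))$. Throughout, $m$ denotes the number of coordinates in which $\gamma(t)$ and $\gamma(\tau)$ differ. First I fix the geometry of $\tilde{M_1}$. It is the $2$-skeleton of $\prod_{k=1}^{3n} S^\infty_k$ with the lifted product cell structure. Each factor $S^\infty_k$ carries the round metric; its $0$-cells are $\pm1\in U(1)$, its $1$-cells are the two semicircles of length $\pi$, and its $2$-cells are hemispheres. So a cell of the $2$-skeleton is one of the following:
\begin{itemize}
\item a vertex;
\item a semicircle in one factor, with all other coordinates in $\{\pm1\}$;
\item a hemisphere in one factor, with all other coordinates in $\{\pm1\}$;
\item a product of two semicircles from two different factors, which is a flat $\pi\times\pi$ square.
\end{itemize}
For integer $t$, all coordinates of $\gamma(t)$ are $\pm1$, so $\gamma(t)$ is a vertex. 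I will also note that $m$ is always even for integer $t,\tau$. The windows $[t,t+n)$ and $[\tau,\tau+n)$ in $\Z/3n\Z$ have the same size, so their symmetric difference has even cardinality. This parity is what allows the coordinates to be paired.

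For the upper bound, I pair the $m$ differing coordinates arbitrarily into $m/2$ pairs $\{k,l\}$. I flip them one pair at a time, moving along the diagonal of the square $2$-cell given by the product of a semicircle in factor $k$ with a semicircle in factor $l$, all other coordinates being held at their vertex values. Each diagonal has length $\sqrt{2}\pi$. Concatenating gives a path of length $\frac{m}{2}\sqrt{2}\pi=m\cdot\frac{\sqrt{2}\pi}{2}$.

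For the lower bound, I use a Lipschitz function. For each differing coordinate $k$, let $h_k(x)$ be the round-sphere distance in $S^\infty_k$ from the $k$-th coordinate of $x$ to the $k$-th coordinate of $\gamma(t)$. Set $\Phi=\sum_{k\text{ differing}}h_k$. Each $h_k$ is $1$-Lipschitz in its own factor. Inside any cell of the $2$-skeleton at most two coordinates vary, so along any piecewise smooth path the speed satisfies $|\tfrac{d}{ds}\Phi|\le |\dot h_k|+|\dot h_l|\le\sqrt{2}\,|\dot x|$ by Cauchy--Schwarz, using the product metric on the cell. Hence $\Phi$ is $\sqrt{2}$-Lipschitz for the path metric of $\tilde{M_1}$. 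Since antipodal points $\pm1$ are at round distance $\pi$, we have $\Phi(\gamma(\tau))-\Phi(\gamma(t))=m\pi$. Therefore $d(\gamma(t),\gamma(\tau))\ge m\pi/\sqrt{2}=m\cdot\frac{\sqrt{2}\pi}{2}$.

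The main point needing care is the Lipschitz estimate for $\Phi$. It relies on two things: that the metric on each $2$-cell is the product metric, and that a path can cross cell boundaries only finitely often up to approximation, so the estimate is applied cell by cell. Everything else is routine.
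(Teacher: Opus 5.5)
Your proof is correct and follows the paper's argument. Your $\Phi$ is, up to sign and an additive constant, the paper's $\sqrt{2}$-Lipschitz function $F_t^\tau=\sum_{i=t}^{i<\tau}(\chi_i-\chi_{i+n})$, and your diagonal moves across $\pi\times\pi$ squares are exactly what $\gamma$ does between consecutive integer times. The only difference is that pairing the differing coordinates arbitrarily handles all $t,\tau$ at once, whereas the paper assumes $t\leqslant\tau\leqslant t+n$ and appeals to symmetry for the remaining cases.
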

        Thus, we can describe $f(\gamma(t))$ for $t\in\Z$. If we denote by $\Delta(n)$ the closed broken line in $\R^2$ on the vertices $(0,n)$, $(n,0)$ and $(n,n)$, then $f(\gamma(t))$ follows the integral points of $\Delta(n)$, that are scaled up by $\sqrt{2}\pi$. Thus, the whole $f(\gamma)$ follows $\Delta':=\Delta(\sqrt{2}\pi n)$ deviating by at most $\pi/\sqrt{2}$ in either coordinate (that is half the length between two consecutive integral points of $\gamma$). It follows that $f(\gamma)$ represents a basis element of $H_1(U_r(\Delta'))$ for such $r$ that $f(\gamma)\subset U_r(\Delta')$ and $U_r(\Delta')\sim S^1$, which is ensured by, respectively, $r\geqslant\pi$ and $r\leqslant r_0= \sqrt{2}\pi n/4$. Thus any contraction of $\gamma$, viewed as a 2-chain, would therefore map to a basis element in $H_2(\R^2,U_r(\Delta'))$ for such $r$, or in $H_2(\R^2,U_r(f(\gamma)))$ for $r\leqslant r_0-\pi$. At this point we can apply lemma~\ref{fibercontract} to a mappng $f/\sqrt{2}$ (scaled down to make it 1-Lipschitz) and conclude that 
        $$\UW_{1}(\tilde{M_1})\geqslant \frac{r_0 -\pi}{2\sqrt{2}}=\frac{n-2\sqrt{2}}{8}\pi$$
    \end{proof}
    \begin{proof}[Proof of lemma~\ref{geodesic}]
        We may assume that $t\leqslant\tau\leqslant t+n$, with other cases following by symmetry. We define some useful coordinates on $\tilde{M_1}$: let $\chi_i:S_i^\infty\to \R$ be the distance function to $1\in U(1)\subset S_i^\infty$ on the $i$-th $S^\infty$ factor of the product $\prod_{i=0}^{3n}S_i^\infty$ whose 2-skeleton is $\tilde{M_1}$. The function 
        $$F_t^\tau=\sum_{i=t}^{i<\tau}\bigl(\chi_{i}-\chi_{i+n}\bigr)$$
        is $\sqrt{2}$-Lipschitz on the 2-skeleton $\tilde{M_1}$, since on the product of two 1-cells it is a function with gradient $(\pm 1,\pm 1)$ and it is 1-Lipschitz on 2-cells of $S_i^\infty$. Since $F_t^\tau(\gamma(t))=(\tau-t)\pi$ and $F_t^\tau(\gamma(\tau))=-(\tau-t)\pi$, then $$d(\gamma(t),\gamma(\tau))\leqslant 2(\tau-t)\pi/\sqrt{2}.$$
        On the other hand, $\gamma$ is a path that realizes this distance. 
    \end{proof}

\subsection{Constant $C$ theorem~\ref{cover}, case $k=n-1$}
    Like in the previous section, we want to show that the constant $\bf C$ in the estimate 
    \begin{equation}\label{cn-1}
        \UW_{n-1}(\widetilde{M_i})-2\diam_{\tilde{M_i}}(A)\leq {\bf C}\UW_{n-1}(\widetilde{M_1 \#_A M_2}).
    \end{equation}
    of theorem~\ref{cover} cannot be uniform. We construct a space $M_1$ whose universal cover has big $\UW_{n-1}$, fill most of $M_1$ with a ball $A$ of bounded size and collapse it (replace with small $A'$ by taking connected sum with $M_2=A\sqcup_{\partial A}A'$) so that the universal cover loses most of its $\UW_{n-1}$.

    Let $T^n$ be a torus $\R^n/\Z^n$ and $A\subset T^n$ be a Riemannian ball in its top cell that is $\e$-dense within that cell. Let $\gamma_i$ be loops embedded in $T^n\setminus A$ without intersecting each other so that their classes form the standard basis of $(N\Z)^n\subset\Z^n\cong \pi_1(T^n)$. We define $M_1$ to be a result of gluing a disk $D^2_i$ along each $\gamma_i$. To make it a manifold, the attachment should be done as a surgery replacing a tubular neighborhood $S^1\times D^{n-1}$ of $\gamma_i$ with $D^2\times S^{n-2}$. We want large enough metric on $D^2$ that we specify later, but we make the other factor small: $\diam(S^{n-2})<\e$; this sphere factor also forces $n\geqslant 4$ for the fundamental group $\pi_1(M_1)\cong (\Z/N\Z)^n$ to not be affected.

    Let us verify that these $M_1$ and $A$ have the desired metric properties. If $\e$ is small enough and the replacement $A'$ had diameter $<\e$, then $\widetilde{M_1\#_AM_2}$ is a bunch of handles $D^2\times S^{n-2}$ attached to an $N^n$-sheet cover of $T^n$, which is crumpled to have diameter $<1$ by shrinking interiors of copies of $A$ down to $A'$. Therefore, we can map $\widetilde{M_1\#_AM_2}$ to a wedge of spheres $\bigvee_i S^2_i$ by collapsing the sphere component of each handle $D^2\times S^{n-2}$ to make it a disk, and sending the disk boundary and the remaining torus part to the base point of the wedge. By construction, the width of this projection is $<1$, so $\UW_k(\widetilde{M_1\#_AM_2})<1$ for $k\geqslant 2$.

    On the other hand, we will show that 
    \begin{lem}\label{tw}
        For the $M_1$ constructed above as $T^n$ with handles $D^2\times S^{n-2}$ attached along loops that span $(N\Z)^n\subset \pi_1(T^n)$, we have 
        $$\UW_{n-1}(\tilde{M_1})\geqslant N/8$$
        if the metrics on $D^2$ is large enough.
    \end{lem}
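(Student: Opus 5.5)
We want $\UW_{n-1}(\tilde{M_1})\geqslant N/8$, and we follow the template of lemma~\ref{rp2w}: build a $1$-Lipschitz map $f:\tilde{M_1}\to\R^n$ and a closed subset $X_0$ such that $f_*:H_n(\tilde{M_1},X_0)\to H_n(\R^n,U_{2\rho}(f(X_0)))$ is nontrivial with $\rho\approx N/8$, and then apply lemma~\ref{fibercontract}.

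\emph{Structure of the cover.} Since $\pi_1(M_1)\cong(\Z/N\Z)^n$, the universal cover $\tilde{M_1}$ is the torus $\R^n/(N\Z)^n$ (the $N^n$-sheeted cover of $T^n$) with handles $D^2\times S^{n-2}$ attached. The lifts of the $\gamma_i$ are closed loops of length about $N$ in the coordinate directions, and the handles cap them off.

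\emph{The map $f$.} Take $f$ to be the composition $\tilde{M_1}\to T_N:=\R^n/(N\Z)^n$ with the identification of $T_N$ with a box $[0,N]^n$ whose boundary is sent away. Concretely, use coordinates $f_j(x)=\min\bigl(d_j(x,\Gamma_j),\, N/2\bigr)$-type functions: for each coordinate $j$, take the distance to a codimension-one hypertorus in the $j$-th direction. This gives a $1$-Lipschitz map $T_N\to[0,N/2]^n$ that folds each circle of length $N$ onto an interval. Alternatively, and closer to the paper's style, set $f(x)=\bigl(d(x,H_1),\dots,d(x,H_n)\bigr)$, where $H_j\subset\tilde{M_1}$ is the preimage of a coordinate hyperplane $\{x_j=0\}$ together with the handles attached there. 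On the handles we extend by requiring the disks $D^2$ to be so large that nothing is gained by shortcuts: we choose the metric on $D^2$ so that every path through a handle between points of $\partial D^2\times S^{n-2}$ is at least as long as the corresponding torus path. Then the torus part remains geodesically convex, distances on it are the flat ones of $T_N$, and $f$, rescaled by $1/\sqrt{n}$ if needed, is $1$-Lipschitz.

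\emph{Homology class.} Take $X_0$ to be the union of the handles together with a small neighbourhood of the $1$-skeleton of loops where the handles are attached, or more simply the preimage of the boundary of the folded cube. Since $T_N$ is a closed oriented $n$-manifold, its fundamental class relative to $X_0$ maps onto the relative fundamental class of the cube $[0,N/2]^n$ modulo the neighbourhood of its boundary faces that contain $f(X_0)$. The degree is $2^n$ from folding, but the same argument works with one sheet. This stays nonzero in $H_n(\R^n,U_{2\rho}(f(X_0)))$ as long as the center of the cube is not covered, i.e.\ for $2\rho<N/4$, giving $\rho\approx N/8$.

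\emph{Main obstacle.} The delicate step is controlling $f(X_0)$. The handles must be included in $X_0$ or mapped near the boundary, while the handle disks must be large enough not to create shortcuts, which is needed for Lipschitzness with respect to the torus coordinates. At the same time, the torus class in $H_n(\tilde{M_1},X_0)$ must remain nontrivial. I would first place the attaching loops $\tilde\gamma_i$ in the coordinate hyperplanes $\{x_j=0\}$, so that the handles, with their large $D^2$ metrics, are mapped by the distance functions into the faces $\{f_j=0\}$ or near them. I would then check that a large $D^2$ metric, for instance a long cylinder capped by a disk, makes $d(\cdot,H_j)$ vanish on the handle attached along $H_j$, while the other coordinates vary by at most $\e$ there because of the small $S^{n-2}$ factor and the fact that the $D^2$ distance to the attaching loop is realized along the loop itself.
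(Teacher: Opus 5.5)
There is a genuine gap, and it sits exactly at the step you flag as the main obstacle. In $\tilde{M_1}$ the deck group $(\Z/N\Z)^n$ acts on $\R^n/(N\Z)^n$ by unit translations. So each $\gamma_i$ has $N^n$ lifts, one passing through every unit cell. Choosing $\gamma_i$ inside a coordinate hyperplane of the base $T^n$ only puts its lifts on every level $\{x_j=m\}$, $m\in\Z$, not just on $x_j\in\{0,N/2\}$. Hence the attaching loops and handles are $O(1)$-dense in the cube $(0,N/2)^n$. Any $X_0$ containing them, or neighbourhoods of the attaching loops, then has $U_{2\rho}(f(X_0))$ swallowing the whole cube once $\rho$ is of order $\sqrt{n}$, far short of $N/8$.

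The cycle is also not right as stated. $T_N$ is not a subset of $\tilde{M_1}$, because the surgery removed the tubes $S^1\times D^{n-1}$. The fundamental class of the closed manifold $\tilde{M_1}$ is useless: it comes from $H_n(\tilde{M_1})$, and $H_n(\R^n)=0$. Concretely, the $2^n$ folded sheets enter with alternating orientations, so the degree is $0$, not $2^n$. Passing to ``one sheet'' is the right instinct. But the sheet $C=(0,N/2)^n$ minus the tubes has boundary along the tube boundaries in the middle of the cube, which brings you back to the density problem.

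The missing idea, which is what the paper does, is to put the handles into the relative cycle rather than into $X_0$. For each lifted loop, take the half $\gamma_{i,C}$ that crosses $C$ and the half-disk $D_{i,C}$ (times the tiny $S^{n-2}$) bordering it. The endpoints of $\gamma_{i,C}$ map to $\partial[0,N/2]^n$, so you can extend $f$ over the disks so that the cut arc $\partial D_{i,C}\setminus\gamma_{i,C}$ maps outside the open cube. The large $D^2$ metric is what makes this extension $1$-Lipschitz.

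Then $S=C\cup\bigcup D_{i,C}$ has $f(\partial S)\subset\R^n\setminus(0,N/2)^n$. The handles map to an essentially $2$-dimensional set, of measure zero in $\R^n$. So only the single sheet $C$ contributes to the degree over generic points of the cube, with degree $1$. Lemma~\ref{fibercontract} with $X_0=\partial S$ then applies for all $2\rho<N/4$.

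Finally, drop the $1/\sqrt{n}$ rescaling: it would only give $N/(8\sqrt{n})$. The product of the coordinate folds is already $1$-Lipschitz on the flat torus, and $1$-Lipschitzness on the handles is arranged by the choice of the $D^2$ metric.
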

    Together with $\diam(A)\leqslant\sqrt{n}$ this shows the unboundedness of $\bf C$ in~(\ref{cn-1}) by letting $N\to\infty$ for a given $n$.
    \begin{proof}[Proof of lemma~\ref{tw}]
        If in the construction of $M_1$ we glued in actual disks $D^2$ instead of doing surgeries, the original torus would be an actual subset of our space and $\tilde{M_1}$ would have an isometric copy of a cube $[0,N/2]^n$ (at least if $D^2$ has large enough metric to not introduce shortcuts that would affect distances within the torus). However, our construction poked some holes in $T^n$ in the manifold case, so we need a more flexible argument.

        Again, we use lemma~\ref{fibercontract} by mapping the cube $C:=(0,N/2)^n\subset T^n$ (or rather, what remains of it in $\tilde{M_1}$) over the same cube in $\R^n$. We define the mapping $f:\tilde{M_1}\to \R^n$ on the torus region as the product of 1-Lipschitz maps $\R/(N\Z)\to [0,N/2]$ that act as identities on the $[0,N/2]$ half and fold the second half back. For simplicity we will be extending the mapping $f$ to disks $D^2$, because for the actual $D^2\times S^{n-2}$ the construction is the same, just with more cluttered notation. For each copy $\tilde{\gamma_i}$ of $\gamma_i$ in the cover let $\gamma_{i,C}$ be the half of it that may cross the cube $C$ (that is, has $i$-th coordinate in $(0,N/2)$), and let $D_{i,C}$ be the half of the corresponding disk $\tilde{D^2_i}$ that borders $\gamma_{i,C}$. We extend the mapping to $D^2$ so that $f(\partial D_{i,C} \setminus \gamma_{i,C})$ lies outside the target cube $(0,N/2)^2$ (which is possible since $\partial \gamma_{i,C}$ is mapped to the boundary of the cube). The metric on $D^2_i$ should be picked large enough for this map to be 1-Lipschitz. Now we have constructed a subset $S:=C\cup \bigcup D_{i,C}$ that is mapped by $f$ over $[0,N/2]^2$ so that $f(\partial S)$ is outside $(0,N/2)^2$. That means, we can apply lemma~\ref{fibercontract} to the mapping $f:\tilde{M_1}\to \R^n$.
        Since $f(\partial S)\subset \R^n\setminus (0,N/2)^n$, we have a following composition for any $\rho$:
        $$H_n(S,\partial S)\subset H_n(\tilde{M_1},\partial S)\stackrel{f_*}{\to}H_n\bigl(\R^n,U_{2\rho}(f(\partial S))\bigr)\to H_n\bigl(\R^n,U_{2\rho}(\R^n \setminus (0,N/2)^n)\bigr).$$
        The last homology group is $\Z$ for $2\rho<N/4$, in which case the full composition is an isomorphism (most target points of $(0,N/2)^n$ are not hit by almost 2-dimensional $D^2\times S^{n-2}$ and are only mapped onto by the cube $C$ with degree 1). So the maps, including the middle $f_*$, are non-zero for any $\rho<N/8$, making the lemma applicable. Therefore $\UW_{n-1}(\tilde{M_1})\geqslant N/8$. 
    \end{proof}

\printbibliography

\end{document}